  \numberwithin{equation}{section}
\tikzset{degil/.style={
  decoration={markings,
  mark= at position 0.5 with {
  \node[transform shape] (tempnode) {$\backslash$};
  \draw[thick] (tempnode.north east) -- (tempnode.south west);
  }}, postaction={decorate}
}}
\setlist[itemize,enumerate]{leftmargin=0.9cm}
\DeclareMathAlphabet{\mathchanc}{OT1}{pzc}%
                                 {m}{it}
\newcommand{\bA}{\mathbb{A}}
\newcommand{\bE}{\mathbb{E}}
\newcommand{\bP}{\mathbb{P}}
\newcommand{\bQ}{\mathbb{Q}}
\newcommand{\bR}{\mathbb{R}}
\newcommand{\bZ}{\mathbb{Z}}
\newcommand{\cC}{\mathcal{C}}
\newcommand{\cI}{\mathcal{I}}
\newcommand{\cK}{\mathcal{K}}
\newcommand{\cL}{\mathcal{L}}
\newcommand{\cO}{\mathcal{O}}
\newcommand{\wt}{\widetilde}
\newcommand{\wht}{\widehat}
\DeclareMathOperator{\Div}{Div}
\DeclareMathOperator{\length}{{length}}
\DeclareMathOperator{\mult}{mult}
\DeclareMathOperator{\ord}{{ord}}
\DeclareMathOperator{\Sing}{{Sing}}
\DeclareMathOperator{\Supp}{{Supp}}
\newcommand{\epsint}{\varepsilon_{\rm int}}
\newcommand{\Ord}[2]{
\ifthenelse{\equal{#2}{no}}
  {
   \operatorname{ord}_{#1}
  }
  {
   \operatorname{ord}_{#1}(#2)
  }
}
\newcommand{\Ard}[2]{
\ifthenelse{\equal{#2}{no}}
  {
   \mathop{\overline{\ord}_{#1}}
  }
  {
   \mathop{\overline{\ord}_{#1}}(#2)
  }
}
\newcommand{\Mult}[3]{
\ifthenelse{\equal{#3}{no}}
  {
   \overline{\operatorname{mult}}_{#1}#2
  }
  {
   \ifthenelse{\equal{#2}{no}}
  {
   \overline{\operatorname{mult}}_{#1}\bullet|_{#3}
  }
  {
   \overline{\operatorname{mult}}_{#1}#2|_{#3}
  }
  }
}
\newcommand{\factor}[2]{\left. \raise 2pt\hbox{\ensuremath{#1}} \right/
        \hskip -2pt\raise -2pt\hbox{\ensuremath{#2}}}
\renewcommand\subsection{
  \renewcommand{\sfdefault}{pag}
  \@startsection{subsection}%
  {2}{0pt}{.8\baselineskip}{.4\baselineskip}{\raggedright
 \sffamily\itshape\small\bfseries
  }}
\renewcommand\section{
  \renewcommand{\sfdefault}{phv}
  \@startsection{section} %
  {1}{0pt}{\baselineskip}{.8\baselineskip}{\centering
    \sffamily
    \scshape
    \bfseries
}}
\newcommand{\fm}{\mathfrak{m}}
\tikzstyle{e} = [ellipse, minimum width=1cm, minimum height=0.5cm,text centered, draw=black]
\tikzstyle{arrow} = [thick,-,>=stealth]
\tikzstyle{connection}=[inner sep=0,outer sep=0]
\tikzset{
    vertl/.style={anchor=south, rotate=90, inner sep=.4mm, pos=0.4}
}
\tikzset{
    vertr/.style={anchor=north, rotate=90, inner sep=.7mm, pos=0.4}
}
\tikzset{
    diag/.style={anchor=south, rotate=30, inner sep=.5mm}
}
\tikzset{
    ddiag/.style={anchor=south, rotate=-16, inner sep=0mm}
}
   \author[L. Rösler]{Linus Rösler}
  \address{\'Ecole Polytechnique F\'ed\'erale de Lausanne, Chair of Algebraic Geometry \newline 
    \indent MA C3 615 (Bâtiment MA), Station 8, CH-1015 Lausanne}
  \email{linus.rosler@epfl.ch}
  \title[On Seshadri constants of adjoint divisors on surfaces and threefolds]{On Seshadri constants of adjoint divisors on surfaces and threefolds in arbitrary characteristic}
  \subjclass[2020]{Primary 14C20, 14J30, Secondary 13H15} 
  \keywords{Seshadri constants, surfaces, threefolds, Fujita’s conjecture, arbitrary characteristic.}
   \def\MR#1{}
\begin{document}

\begin{abstract}
    We develop a new approach towards obtaining lower bounds of the Seshadri constants of ample adjoint divisors on smooth projective varieties $X$ in arbitrary characteristic. Let $x\in X$ be a closed point and $A$ an ample divisor on $X$. If $X$ is a surface, we recover some known lower bounds by proving, e.g., that $\varepsilon(K_X+4A;x)\geq 3/4$. If $X$ is a threefold, we prove that for all $\delta>0$ and all but finitely many curves $C$ through $x$, we have $\frac{(K_X+6A).C}{\mult_x C}\geq\frac{1}{2\sqrt{2}}-\delta$. In particular, if $\varepsilon(K_X+6A;x)<1/(2\sqrt{2})$, then $\varepsilon(K_X+6A;x)$ is a rational number, attained by a Seshadri curve $C$.
\end{abstract} 
\maketitle
\tableofcontents
\section{Introduction}\label{sec:intro}
A general phenomenon for notions of positivity of a line bundle on an algebraic variety is that numerical positivity (e.g.,\ ampleness) leads to geometrical positivity (e.g.,\ basepoint--freeness) by passing to a sufficiently high multiple. A natural and broad question is then whether, in certain situations, one can make this effective, i.e.,\ to precisely quantify the meaning of `sufficiently high' in the last sentence. While being interesting already in their own right, results in this area are often key ingredients in establishing boundedness results.

The guiding question of this paper is the following conjecture due to Murayama.
\begin{conjectureintro}[\protect{\cite[Conjecture 1.2.1.(I)]{Murayama_Effective_Fujita-type_theorems_for_surfaces_in_arbitrary_characteristic}}]\label{conj:weakest_Fujita_freness_conjecture}
    There exist functions $e,f\colon\bZ_{>0}\to\bZ_{>0}$ such that for every smooth projective variety $X$ of dimension $n$ over an algebraically closed field and every ample divisor $A$ on $X$, the linear system $|e(n)K_X+f(n)A|$ is basepoint--free.
\end{conjectureintro}

This is a weaker version of Fujita's freeness conjecture (\cite[Conjecture on p. 67]{Fujita_On_polarized_manifolds_whose_adjoint_bundles_are_not_semipositive}), since one is allowed to take arbitrary multiples (depending on the dimension) of both $K_X$ and $A$. In characteristic zero, \autoref{conj:weakest_Fujita_freness_conjecture} is answered affirmatively by a landmark result of Angehrn and Siu in \cite{Angehrn_Siu_Effective_freeness_and_point_separation_for_adjoint_bundles}, namely that taking $e(n)=1$ and $f(n)=\binom{n+1}{2}+1$ works. 

In positive characteristic, only the surface case is well understood. On the one hand, by \cite{Shepherd_Barron_Unstable_vector_bundles_and_linear_systems_on_surfaces_in_characteristic_p, Chen_Fujitas_conjecture_for_quasi_elliptic_surfaces}, Fujita's freeness conjecture holds for smooth surfaces of special type, and on the other hand, Di Cerbo and Fanelli showed in \cite[Theorem 4.12]{Cerbo_Fanelli_Effective_Matsusaka_s_Theorem_for_surfaces_in_characteristic_p} that $|2K_X+tA|$ is basepoint--free for all smooth surfaces $X$ of general type, ample divisors $A$ on $X$ and $t\geq 5$. Combining the two, we see that $|2K_X+6A|$ is basepoint--free in all cases, so in \autoref{conj:weakest_Fujita_freness_conjecture}, the values $e(2)=2$ and $f(2)=6$ work. Note, however, that it is really necessary to have $e(2)>1$: In \cite{Gu_Zhang_Zhang_counterexamples_to_Fujitas_conjecture_on_surfaces_in_positive_characteristic}, by generalizing the construction of Raynaud surfaces, Gu, Zhang, and Zhang obtain for every positive integer $r$ a smooth projective surface $X$ and an ample divisor $A$ on $X$ such that $|K_X+rA|$ has a basepoint. In particular, the pair of values $(e(2),f(2))=(2,6)$ is lexicographically sharp for the case $n=2$ in \autoref{conj:weakest_Fujita_freness_conjecture} (combine the counterexamples in \cite{Gu_Zhang_Zhang_counterexamples_to_Fujitas_conjecture_on_surfaces_in_positive_characteristic} with $X=\bP^2$).

\begin{remintro}
    If \autoref{conj:weakest_Fujita_freness_conjecture} is true, it is natural to ask for the lexicographically minimal value of $(e(n),f(n))$. For $n=1,2$, we see that this is $(n,n(n+1))$, but it is nothing more than a tempting guess that this could hold for arbitrary $n$.
\end{remintro}

In dimension $n\geq 3$ and positive characteristic, essentially nothing is known. Conceptually, the problem is that the approach in \cite{Shepherd_Barron_Unstable_vector_bundles_and_linear_systems_on_surfaces_in_characteristic_p,Cerbo_Fanelli_Effective_Matsusaka_s_Theorem_for_surfaces_in_characteristic_p,Chen_Fujitas_conjecture_for_quasi_elliptic_surfaces} (which is an adaption of Reider's method in \cite{Reider_Vector_bundles_of_rank_2_and_linear_systems_on_algebraic_surfaces}, where he established the surface case in characteristic zero), relies on the Bombieri--Mumford classification of surfaces. In \cite{Murayama_Effective_Fujita-type_theorems_for_surfaces_in_arbitrary_characteristic}, Murayama proposes to prove \autoref{conj:weakest_Fujita_freness_conjecture} by exhibiting a lower bound of Seshadri constants (see \autoref{subsec:Seshadri_constants}).
\begin{conjectureintro}[\protect{\cite[Conjecture 1.3.1]{Murayama_Effective_Fujita-type_theorems_for_surfaces_in_arbitrary_characteristic}}]\label{conj:lower_bound_seshadri_adjoint}
    There exist functions $g,h\colon\bZ_{>0}\to\bZ_{>0}$ and $c\colon\bZ_{>0}\to\bR_{>0}$ such that for every smooth projective variety $X$ of dimension $n$ over an algebraically closed field, every closed point $x\in X$ and every ample divisor $A$ on $X$, we have
    \begin{align*}
        \varepsilon(g(n)K_X+h(n)A;x)\geq c(n).
    \end{align*}
\end{conjectureintro}
Note that \autoref{conj:weakest_Fujita_freness_conjecture} and \autoref{conj:lower_bound_seshadri_adjoint} are basically equivalent (by \cite[Proposition 5.1.19]{Lazarsfeld_Positivity_in_algebraic_geometry_I} resp.\ \cite[Corollary 3.2]{Mustata_Schwede_A_Frobenius_variant_of_Seshadri_constants} and \cite[Example 2.2.3]{Murayama_Seshadri_constants_and_fujitas_conjecture_via_positive_characteristic_methods}). It is known for $n=2$ by taking $g(2)=1$, $h(2)=4$ and $c(2)=1/2$. This was proven in characteristic zero in \cite[Theorem 4.1]{Bauer_Szemberg_on_the_Seshadri_constants_of_adjoint_line_bundles} and then in \cite[Corollary 3.2]{Murayama_Effective_Fujita-type_theorems_for_surfaces_in_arbitrary_characteristic} in arbitrary characteristic, essentially by the same proof. It is important to note that the proof does not rely on the Bombieri--Mumford classification, and thus this provides a classification-- and characteristic--free proof of \autoref{conj:weakest_Fujita_freness_conjecture} for $n=2$. Nevertheless, it is not clear how to generalize their proof to higher dimensions. In characteristic zero, it is observed in \cite[Theorem 3.1]{Bauer_Szemberg_on_the_Seshadri_constants_of_adjoint_line_bundles} that the Seshadri constant of an ample adjoint divisor on a smooth projective variety of dimension $n$ is bounded from below by $c(n)=2/(n^2+n+4)$, so \autoref{conj:lower_bound_seshadri_adjoint} holds if we further take $g(n)=1$ and $h(n)=n+2$. However, their observation relies on the positive answer to Fujita's freeness conjecture given in \cite{Angehrn_Siu_Effective_freeness_and_point_separation_for_adjoint_bundles}, i.e.,\ precisely the type of result we want to achieve through \autoref{conj:lower_bound_seshadri_adjoint}. To summarize, although it seems that \autoref{conj:lower_bound_seshadri_adjoint} could be the way to prove \autoref{conj:weakest_Fujita_freness_conjecture} in higher dimensions, the case of dimension $n\geq 3$ and positive characteristic is wide open also in this regard.

\subsection{Strategy and main results}\label{subsec:strategy_and_main_results}
In this paper, we develop a new approach to \autoref{conj:lower_bound_seshadri_adjoint} in arbitrary dimension. The key point of the strategy is to combine two types of lower bounds for intersection numbers: when the intersection is proper (e.g.,\ \eqref{eq:lb_intersection_intro}), and intersections involving $K_X$ where we can use adjunction (e.g.,\ \eqref{eq:adjunction_intro}). Let us explain this in greater detail.

We start with the following very basic observation (\autoref{lem:intersection_lb_multiplicities}): Let $X$ be a smooth projective variety, $x\in X$ a closed point, and $C$ a curve through $X$. If now $D$ is an effective divisor with $C\not\subseteq\Supp D$, then we have
\begin{equation}\label{eq:lb_intersection_intro}
    D.C\geq\mult_x D\cdot\mult_x C.
\end{equation}
Therefore, if we are given an ample divisor $L$ and want to bound $\varepsilon(L;x)$ from below, we can look for divisors $D$ in $|lL|$ (where $l>0$) with high multiplicity at $x$, and then we get a lower bound
\begin{align*}
    \frac{L.C}{\mult_x C}\geq \frac{1}{l}\mult_x D,
\end{align*}
at least for curves $C\not\subseteq\Supp D$. By considering the $\bQ$--divisor $D'=(1/l)D$ and defining its multiplicity at $x$ as $\mult_x D'\coloneqq(1/l)\mult_x D$ (see \autoref{def:order_Q-divisor} and \autoref{rem:properties_asymptotic_order}), we are naturally led to maximizing $\mult_x$ on $|L|_{\bQ}$ (\autoref{lem:div_high_mult_irred}). But eventually, to get lower bounds for Seshadri constants, we will also have to deal with curves contained in the support of $D$. We will now outline how this can be done in the case of adjoint divisors.

Say for simplicity that $X$ is a surface, and that $L$ is an adjoint divisor of the form $L=K_X+4A$ with $A$ ample. Then as $K_X+3A$ is nef by the Cone Theorem, divisors $D$ in $|A|_{\bQ}$ with large multiplicity at $x$ again lead to lower bounds for Seshadri ratios for curves $C\not\subseteq\Supp D$: we have
\begin{align*}
    \frac{(K_X+4A).C}{\mult_x C}\geq\frac{D.C}{\mult_x C}\geq \mult_x D.
\end{align*}
However, for components of $D$ this does not work, as then the self--intersection $C^2$ appears in $D.C$, and it can be arbitrarily negative. The key idea is now the following: To take care of the components of $D$, we exploit the presence of $K_X$ and use the numerical adjunction formula on surfaces (\cite[Exercise V.1.3]{Hartshorne_Algebraic_geometry}), namely
\begin{align*}
    (K_X+C).C=2p_a(C)-2.
\end{align*}
If we now denote $\mu\coloneqq\mult_x C$, then by \cite[Example V.3.9.2]{Hartshorne_Algebraic_geometry} we obtain
\begin{equation}\label{eq:adjunction_intro}
    (K_X+C).C\geq \mu^2-\mu-2.
\end{equation}
So, if we view $(K_X+C).C$ as a replacement of the self--intersection $C^2$, we obtain a lower bound of order $(\mult_x C)^2$, as if $C$ would intersect itself properly.

The tricky part then is making the term $(K_X+C).C$ appear in $(K_X+3A+D).C$, as $C$ can have an arbitrary coefficient in $D$. We achieve this by finding a suitable convex decomposition of $K_X+3A+D$, and thereby we obtain the following result in \autoref{sec:surface_case}.

\begin{propletter}[\autoref{prop:lower_bound_surfaces}]\label{propA}
    Let $X$ be a smooth projective surface, $x\in X$ a closed point and $A$ an ample divisor on $X$. Then
    \begin{align*}
        \varepsilon(K_X+4A;x)\geq3/4.
    \end{align*}
\end{propletter}

\begin{remintro}
    This bound is not sharp, see \autoref{rem:surface_case}. However, \autoref{prop:lower_bound_surfaces} is actually slightly more general than what is stated above, and this will be useful in the proof of \autoref{thmB}, see \autoref{rem:comp_propA_murayama}
\end{remintro}

The main point of interest of this paper is that the above approach can be generalized to higher dimensions. However, the adjunction argument becomes more complicated, as curves will have higher codimension in this case. But the key idea is the same: for adjoint divisors of the form $K_X+tA$, we take $D\in |A|_{\bQ}$ with large multiplicity at $x$ and replace one $A$ with $D$. For curves $C$ with $C\not\subseteq\Supp D$ we use \eqref{eq:lb_intersection_intro} to obtain a lower bound, and for curves $C\subseteq \Supp D$ we use adjunction (for the component containig $C$). The key technical input here is a replacement of \eqref{eq:adjunction_intro} in higher dimensions (\autoref{lem:bend-and-break}), which we prove using the remarkably general version of Bend--and--Break in \cite[Section II.5]{Kollar_Rational_curves_on_algebraic_varieties}.

Unfortunately, with the current state of the method, we are unable to get a universal lower bound for all curves. However, for threefolds, we obtain an explicit lower bound after excluding finitely many curves, which leads to the following rationality result.

\begin{thmletter}[\autoref{thm:threefold_case}]\label{thmB}
    Let $X$ be a smooth projective threefold over an algebraically closed field of arbitrary characteristic, let $A$ be an ample divisor on $X$ and $x\in X$ a closed point. Let $\delta>0$ be a positive real number. Then for all but finitely many curves $C$ through $x$ we have
    \begin{align*}
        \frac{(K_X+6A).C}{\mult_xC}\geq \frac{1}{2\sqrt{2}}-\delta.
    \end{align*}
    In particular, if $\varepsilon(K_X+6A;x)<1/2\sqrt{2}$, then $\varepsilon(K_X+6A;x)$ is rational and there exists a curve $C$ such that
    \begin{align*}
        \varepsilon(K_X+6A;x)=\frac{(K_X+6A).C}{\mult_xC}.
    \end{align*}
\end{thmletter}
In general, it is known that submaximal Seshadri constants (i.e.,\ $\varepsilon(L;x)<\sqrt[n]{L^n}$) are attained by some strict subvariety $V\subset X$ through $x$, so in particular they are the $d$--th root of a rational number for some $1\leq d<n$ (see \cite[Proposition 4]{Steffens_remarks_on_Seshadri_constants}). Furthermore, for submaximal Seshadri constants on surfaces, there is a subtle improvement by \cite[Proposition 1.1]{Bauer_Szemberg_Seshadri_constants_on_surfaces_of_general_type}, namely that for all $\delta>0$, there are at most finitely many curves $C$ with $L.C/\mult_x C$ below $\sqrt{L^2}-\delta$. Therefore, \autoref{thmB} can be seen as an analog of this result in the threefold case, which applies to ample enough adjoint bundles and where $\sqrt{L^2}$ is replaced by the absolute constant $1/(2\sqrt{2})$.

\subsection{Notation and basic definitions}\label{subsec:notation}

\begin{itemize}
    \item We work over an algebraically closed field $k$ of arbitrary characteristic. 

    \item A variety is an integral projective scheme over $k$. A subvariety is an integral closed subscheme.

    \item We say that a variety is a curve (resp.\ surface, resp.\ $n$--fold) if it has dimension one (resp.\ two, resp.\ $n$).

    \item Usually, $X$ will denote a smooth $n$--fold, $V$ will denote an arbitrary $d$--fold, $S$ will denote a surface, $C$ will denote a curve, and $x$ will denote a closed point.

    \item On a variety $V$, by \emph{divisor} we mean a Cartier divisor. That is, if we denote by $\cK_V$ the sheaf of rational functions on $V$, and by $\cK_V^*$ resp.\ $\cO_V^*$ the sheaf of units in $\cK_V$ resp.\ $\cO_V$, then a divisor is a global section of $\cK^*_V/\cO_V^*$. We denote by $\Div(V)\coloneqq H^0\left(V,\cK^*_V/\cO_V^*\right)$ the group of divisors on $V$.
    
    \item By \emph{$\bQ$--divisor}, we mean a $\bQ$--Cartier $\bQ$--divisor, i.e.,\ an element of $\Div(V)\otimes\bQ$. Furthermore, a $\bQ$--divisor is called \emph{integral} if it is in the image of the natural map $\Div(V)\to \Div(V)\otimes\bQ$. Note that if $V$ is not normal, then $\Div(V)$ might have torsion. 

    \item A divisor is effective if it has a representative $\{(U_i,f_i)\}_i$ such that $f_i$ is regular on $U_i$ for all $i$. A $\bQ$--divisor is effective if it is a non-negative rational multiple of an effective divisor.

    \item For a $\bQ$--divisor $A$, we denote by $|A|_\bQ$ its $\bQ$--linear system, i.e.,\
    \begin{align*}
        |A|_{\bQ}\coloneqq\{D\in \Div(V)\otimes\bQ\mid D\text{ effective, }D\sim_{\bQ} A\}.
    \end{align*}
    By definition, $D\sim_{\bQ} A$ means that there exists $l>0$ such that $lD$ and $lA$ are integral and linearily equivalent.

    \item For an effective divisor $D$ on $V$, we denote by $\Supp(D)$ its support, i.e.,\ the set of points $\eta$ such that a local equation of $D$ around $\eta$ is not a unit in $\cO_{V,\eta}$. If $D$ is an effective $\bQ$--divisor, we write $D=(1/a)D'$ for an integer $a$ and an integral effective divisor $D'$ and define $\Supp(D)\coloneqq\Supp(D')$. Note that this is well--defined even if $\Div(V)$ has torsion.

    \item By \cite[Definition 1.3.3.(iv)]{Lazarsfeld_Positivity_in_algebraic_geometry_I}, we can pull back $\bQ$--divisors along blow-ups, and along closed immersions if the image meets the support properly.

    \item For a line bundle $\cL$ and a curve $C$ on a variety $V$, we denote by
    \begin{align*}
        \cL.C\coloneqq \deg \nu^*(\cL|_C)
    \end{align*}
    the intersection number of $\cL$ and $C$, where $\nu\colon C^\nu\to C$ is the normalization of $C$. We extend this to divisors with the formula
    \begin{align*}
        D.C\coloneqq\cO_V(D).C
    \end{align*}
    and to $\bQ$--divisors by linearity.

    \item If $V$ is a local complete intersection (lci), then its canoncial sheaf $\omega_V$ is a line bundle, and we fix a divisor $K_V$ such that $\omega_V=\cO_V(K_V)$.

    \item By \cite[Proposition 29.4.8]{Vakil_The_rising_sea}, if $D$ is an effective divisor on a smooth variety $X$, it is in particular a local complete intersection, and we have
    \begin{align*}
        \omega_D=\omega_X(D)|_D.
    \end{align*}
    In contexts where we can work up to linear equivalence, e.g.,\ when working with intersection numbers, we will use this in additive form implicitly as $K_D=(K_X+D)|_D$, by a slight abuse of notation.
    
\end{itemize}

\subsection{Acknowledgements}\label{subsec:acknowledgements}

The author would like to thank his advisor Zsolt Patakfalvi for numerous fruitful discussions and continuous guidance throughout this project. He would also like to thank Jefferson Baudin, Raymond Cheng, Stefano Filipazzi, and Alapan Mukhopadhyay for useful discussions and/or comments on the content of this article. The author was partly supported by the  project grant \#200020B/192035 from the Swiss National Science
Foundation (FNS), as well as by the ERC starting grant \#804334.

\section{Preliminaries}\label{sec:prelims}
In this section, we recall some definitions and basic properties of multiplicities of varieties and vanishing orders of divisors at points. We also gather some useful properties of the intersection numbers of adjoint divisors with curves. Finally, we define Seshadri constants.

\subsection{Multiplicity and asymptotic order of vanishing}\label{subsec:multiplicity_and_order}
We start with a fundamental notion in this respect, the \emph{Hilbert--Samuel multiplicity}.
\begin{definition}[\protect{\cite[Chapter 4.3]{Fulton_Intersection_theory}}]\label{def:Hilbert-Samuel-multiplicity}
    Let $V$ be a pure--dimensional scheme of finite type over $k$, and let $\eta$ be a scheme point of $V$ of codimension $c$. The \emph{Hilbert--Samuel multiplicity} of $V$ at $\eta$, denoted $\mult_\eta V$, is defined by the equation
    \begin{align*}
        \length\left(\factor{\cO_{V,\eta}}{\fm_{V,\eta}^r}\right)=\frac{\mult_\eta V}{c!}r^c+O(r^{c-1})
    \end{align*}
    as $r$ tends to infinity. We will often call this simply the multiplicity of $V$ at $\eta$. For an integral subscheme $Y$ of $V$ with generic point $\eta$, we will also write $\mult_YV=\mult_\eta V$.
\end{definition}

We will also need the following notion of the order of an effective divisor at a point.

\begin{definition}\label{def:order_divisor}
    Let $V$ be a variety, let $D$ be an effective divisor on $V$ and let $\eta\in V$ be a scheme point. Let $f\in\cO_{V,\eta}$ be a local equation of $D$ around $\eta$. The vanishing order of $D$ at $\eta$, denoted $\ord_\eta D$, is defined as
    \begin{align*}
        \ord_\eta D\coloneqq \max \left\{r\in\bZ_{\geq 0}\mid f\in\fm_{V,\eta}^r\right\}.
    \end{align*}
    For a subvariety $Y$ of $V$ with generic point $\eta$, we will also write $\ord_YD=\ord_\eta D$.
\end{definition}

\begin{remark}\label{rem:properties_order}
    \begin{itemize}
        \item We have $\ord_\eta D>0$ if and only if $\eta$ is in the support of $D$.
        \item If $V$ is regular at $\eta $, then the Hilbert--Samuel multiplicity $\mult_\eta D$ of $D$ at $\eta $ agrees with the order $\ord_\eta  D$, see, e.g.,\ \cite[Exercise 14.5]{Matsumura_Commuatative_ring_theory}. However, this can fail if $V$ is singular at $\eta $. For example, if we take $V$ to be the cusp $v^3=u^2$ and $D$ the divisor cut out by $u$, then if $x$ is the origin, we have $\mult_xD=2$ but $\ord_xD=1$.
        \item If $V$ is regular at $\eta $, then we have $\ord_\eta (nD)=n\ord_\eta D$ for all $n\in\bZ_{>0}$, so we can extend the definition to effective $\bQ$--divisors in a straightforward way. However, this may fail at singular points: with the same notation as in the previous point, note that $\ord_x(2D)=3$ while $\ord_xD=1$. So, at singular points, we need to be more careful.
        \item Note that we always have $\ord_\eta (D+D')\geq\ord_\eta  D+\ord_\eta  D'$. In particular, the sequence $\{\ord_\eta (lD)\}_{l>0}$ is super-additive.
    \end{itemize}
\end{remark}

As foreshadowed in \autoref{subsec:strategy_and_main_results}, we need to extend the definition of the order of vanishing to the case of effective $\bQ$--divisors. A concept that goes in this direction is \cite[Definition 2.2]{Ein_Lazarsfeld_Musta_Nakamaye_Popa_Asymptotic_invariants_of_base_loci}. However, in that definition, the order of vanishing is minimized in a given linear system, but to make lower bounds like \eqref{eq:lb_intersection_intro} as good as possible, we rather want to maximize it. Therefore we introduce the following definition, which is essentially just the asymptotic Samuel function (\cite[Definition 1.1]{Bravo_Encinas_Guillan-Rial_On_some_properties_of_the_asymptotic_Samuel_function}) applied to a local equation of the divisor.

\begin{definition}\label{def:order_Q-divisor}
    Let $V$ be a variety, let $D$ be an effective $\bQ$--divisor on $V$ and let $\eta \in V$ be a scheme point. Write $D=(1/a)D'$ where $a$ is a positive integer and $D'$ is an integral divisor. By \autoref{rem:properties_order}, the sequence $\{\ord_{\eta }(lD')\}_{l>0}$ is super--additive. In particular, by Fekete's lemma (\cite{Fekete_Uber_die_Verteilung_der_Wurzeln_bei_gewissen_algebraischen_Gleichungen_mit_ganzzahligen_Koeffizienten}), we may define
    \begin{align*}
        \Ard{\eta }{D}\coloneqq \frac{1}{a}\lim_{l\to \infty}\frac{\ord_\eta (lD')}{l}.
    \end{align*}
    For a subvariety $Y$ of $V$ with generic point $\eta $, we will also write $\Ard{Y}{D}=\Ard{\eta }{D}$.
\end{definition}

\begin{remark}\label{rem:properties_asymptotic_order}
    \begin{itemize}
        \item It is straightforward to check that the asymptotic order is well--defined: suppose that we have $D=(1/a)D'=(1/a')D''$. Then there exists a positive integer $l_0$ such that $l_0a'D'=l_0aD''$ inside $\Div(V)$. Hence
        \begin{align*}
            \frac{1}{a}\lim_{l\to\infty}\frac{\ord_\eta(lD')}{l}=\frac{1}{a}\lim_{l\to\infty}\frac{\ord_\eta(ll_0a'D')}{ll_0a'}=\frac{1}{a'}\lim_{l\to\infty}\frac{\ord_\eta(ll_0aD'')}{ll_0a}=\frac{1}{a'}\lim_{l\to\infty}\frac{\ord_\eta(lD'')}{l}
        \end{align*}
        \item We have $\Ard{\eta }{D}>0$ if and only if $\eta $ is in the support of $D$. Furthermore, by \cite[Equation 1.11]{Bravo_Encinas_Guillan-Rial_On_some_properties_of_the_asymptotic_Samuel_function}, $\Ard{\eta }{D}$ is always finite and rational.
        \item Fekete's lemma further implies that we have
        \begin{align*}
            \Ard{\eta }{D}=\sup\left\{\left.\frac{\ord_\eta (D')}{a}\ \right|\ D'\in\Div(V),\ a\in\bZ_{>0},\ D=(1/a)D'\right\}.
        \end{align*}
        \item If $V$ is regular at $\eta $ and $D=c D'$ where $c\geq 0$ is rational and $D'$ is integral, we have $\Ard{\eta }{D}=c\ord_\eta D'$. In particular, $\Ard{\eta}{no}$ and $\ord_\eta$ agree on integral divisors. However, if $\eta $ is a singular point, this might not hold, as, e.g.,\ in the example of \autoref{rem:properties_order}, one can show that $\Ard{x}{D}=3/2$.
        \item We trivially have $\Ard{\eta }{nD}=n\Ard{\eta }{D}$ for all $n\in\bZ_{>0}$, even if $V$ is singular at $\eta $.
    \end{itemize}
\end{remark}

We will now prove some basic properties about multiplicity and the asymptotic order. 

\begin{lemma}\label{lem:order_transitive_restriction}
    Let $V$ be a variety, let $x\in V$ be a regular closed point, and let $Y\subset V$ be a prime divisor containing $x$. Let $D$ be an effective $\bQ$--divisor on $V$. Then
    \begin{align*}
        \Ard{x}{D}\geq\mult_xY\cdot\Ard{Y}{D}
    \end{align*}
\end{lemma}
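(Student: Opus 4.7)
The plan is to reduce to the case of an integral effective divisor by using the limit definition of the asymptotic order, and then to exploit that $\cO_{V,x}$ is a UFD (since $V$ is regular at $x$) to factor a power of a local equation of $Y$ out of a local equation of $D$.

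First I would set up notation. Write $D=(1/a)D'$ with $a\in\bZ_{>0}$ and $D'$ an integral effective divisor. For any integer $l\ge 1$, let $f_l\in\cO_{V,x}$ be a local equation of $lD'$ at $x$, and let $g\in\cO_{V,x}$ be a local equation of the prime divisor $Y$ at $x$. Since $V$ is regular at $x$, $\cO_{V,x}$ is a regular local ring; in particular it is a UFD and $g$ is a prime element (because $Y$ is a prime divisor). Also, by the discussion in \autoref{rem:properties_order}, we have $\mult_xY=\ord_xg$.

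The heart of the argument is the pointwise inequality
\begin{equation*}
    \ord_x(lD')\ \geq\ \mult_xY\cdot\ord_Y(lD').
\end{equation*}
To prove this, set $m\coloneqq\ord_Y(lD')$. The localization $\cO_{V,Y}$ of $\cO_{V,x}$ at the prime $(g)$ is a DVR with uniformizer $g$, and $m$ is the $g$-adic valuation of $f_l$ in $\cO_{V,Y}$. Thus there exist $p,q\in\cO_{V,x}$ with $q\notin(g)$ such that $q f_l=g^m p$ in $\cO_{V,x}$. Since $g$ is prime and $g\nmid q$, the UFD property forces $g^m\mid f_l$ in $\cO_{V,x}$, say $f_l=g^m f'_l$. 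Then
\begin{equation*}
    \ord_x(lD')=\ord_xf_l\ \geq\ m\cdot\ord_xg+\ord_xf'_l\ \geq\ m\cdot\mult_xY\ =\ \mult_xY\cdot\ord_Y(lD'),
\end{equation*}
as desired, using the basic super-additivity of $\ord_x$.

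Finally, dividing the displayed inequality by $la$ and passing to the limit as $l\to\infty$ gives
\begin{equation*}
    \Ard{x}{D}=\frac{1}{a}\lim_{l\to\infty}\frac{\ord_x(lD')}{l}\ \geq\ \mult_xY\cdot\frac{1}{a}\lim_{l\to\infty}\frac{\ord_Y(lD')}{l}=\mult_xY\cdot\Ard{Y}{D},
\end{equation*}
where both limits exist by Fekete's lemma as in \autoref{def:order_Q-divisor}. The only mild subtlety is the UFD step factoring $g^m$ out of $f_l$, which genuinely needs the regularity hypothesis at $x$; the rest is essentially formal. Note that we do \emph{not} need regularity of $V$ at the generic point of $Y$, since $\cO_{V,Y}$ is automatically a DVR as a one-dimensional localization of the regular ring $\cO_{V,x}$.
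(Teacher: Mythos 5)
Your proof is correct and follows essentially the same route as the paper's: the key step in both is to use that $\cO_{V,x}$ is a UFD to factor $g^{\ord_Y(lD')}$ out of a local equation of $lD'$ and then observe that $g\in\fm_{V,x}^{\mult_xY}$. The only cosmetic difference is that the paper first notes that at regular points the asymptotic order coincides with the ordinary order (so no limit over $l$ is needed), whereas you carry the estimate through for each $l$ and pass to the limit via Fekete's lemma; both are fine.
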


\begin{proof}
    Let $g\in\cO_{V,x}$ be a local equation of $Y$, and write $D=(1/a)D'$ for a positive integer $a$ and an integral divisor $D'$. Let $f\in \cO_{V,x}$ be a local equation of $D'$. Let $\eta$ be the generic point of $Y$. As $x$ is a regular point, $\eta$ is regular as well, and thus
    \begin{align*}
        \Ard{x}{D}=\frac{1}{a}\ord_x(D'),\quad \Ard{Y}{D}=\frac{1}{a}\ord_Y(D').
    \end{align*}
    Furthermore, $\cO_{V,x}$ is a UFD, and hence we have
    \begin{align*}
        f=g^r\cdot h
    \end{align*}
    where $h\in\cO_{V,x}$ and $r=\ord_Y(D')$. As $m\coloneqq\mult_x Y=\ord_x Y$ by \autoref{rem:properties_order}, we have $g\in\fm_{V,x}^m$, and thus $f=g^rh\in\fm_{V,x}^{mr}$. Therefore, we obtain $\ord_x(D')\geq mr$, which after dividing by $a$ concludes the proof.
\end{proof}

\begin{remark}\label{rem:counterexample_order_semicont}
    It is natural to ask what happens when removing the smoothness assumptions in \autoref{lem:order_transitive_restriction}. One might think that at least the function $x\mapsto\Ard{x}{D}$ can only decrease under generization, but this already fails for the quadratic cone $V=V(w^2-uv)\subseteq\bA^3$: let $x\in V$ be the origin, $Y=V(u,w)$ be one line of the ruling and $D$ be the divisor cut out by $u$. Using \cite[Theorem 2.4]{Bravo_Encinas_Guillan-Rial_On_some_properties_of_the_asymptotic_Samuel_function}, it's straightforward to compute that $\Ard{x}{D}=1$. On the other hand, as $V$ is regular at the generic point of $Y$, the asymptotic order agrees with the usual order of vanishing by \autoref{rem:properties_asymptotic_order}, and one can see that $\Ard{x}{D}=2$. In conclusion, we have
    \begin{align*}
        \Ard{x}{D}=1<2=\Ard{Y}{D}.
    \end{align*}
    A tempting guess would then be that at least the function $x\mapsto \mult_x V\Ard{x}{D}$ can only decrease under generization. But in fact also this fails strongly. The following counterexample was kindly communicated to the author by Ana Bravo, Santiago Encinas and Javier Guillan--Riál: Consider $V=V(w^{n+1}-uv)\subseteq\bA^3$. Note that $V$ has an $A_n$--singularity at the origin $x\in V$, and in particular $\mult_x V=2$. Let $Y=V(u,w)$ and let $D$ be the effective divisor cut out by $u$. By the same computation as for the quadratic cone, one can then see that
    \begin{align*}
        \Ard{x}{D}=1<n+1=\Ard{Y}{D}.
    \end{align*}
    Hence, it is in general impossible to bound $\Ard{x}{D}/\Ard{Y}{D}$ from below only in terms of $\mult_x V$. The only positive result in this direction seems to be \cite[Theorem 3.1]{Bravo_Encinas_Guillan-Rial_On_some_properties_of_the_asymptotic_Samuel_function}, which says that if $\mult_xV=\mult_YV$, then $\Ard{x}{D}\geq\Ard{Y}{D}$.
\end{remark}

The following lemma is a slight generalization of the usual lower bound of intersection numbers by the product of multiplicities.

\begin{lemma}\label{lem:intersection_lb_multiplicities}
    Let $V$ be a variety and let $D$ be an effective $\bQ$--divisor on $V$. Let $x\in V$ be a closed point and $C$ a curve through $x$ such that $C\not\subseteq\Supp D$. Then we have
    \begin{align*}
        D.C\geq\Ard{x}{D}\cdot\mult_x C.
    \end{align*}
\end{lemma}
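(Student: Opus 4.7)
The plan is to reduce the $\bQ$-divisor statement to the classical inequality for integral divisors and then pass to the limit, using that $\Ard{x}{D}$ is defined as a limit of ordinary vanishing orders along integer dilations of an integral representative.

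First I would write $D=(1/a)D'$ for a positive integer $a$ and an integral effective divisor $D'$, as in \autoref{def:order_Q-divisor}. Since $\Supp D=\Supp D'$, the hypothesis $C\not\subseteq\Supp D$ transfers to $C\not\subseteq\Supp(lD')$ for every positive integer $l$. I would then invoke the classical bound $E.C\geq \ord_x(E)\cdot\mult_x C$, valid for every integral effective divisor $E$ on $V$ with $C\not\subseteq\Supp E$, applied to $E=lD'$, to obtain
\begin{align*}
(lD').C\geq \ord_x(lD')\cdot\mult_x C.
\end{align*}
Dividing by $la$ and letting $l\to\infty$ yields $D.C\geq \Ard{x}{D}\cdot\mult_x C$ directly from the definition of $\Ard{x}{D}$; note that $D.C$ is independent of $l$, while the right-hand side converges to $\Ard{x}{D}\cdot\mult_x C$ by \autoref{def:order_Q-divisor}.

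The only substantive step is the classical integral bound, which is a standard computation on the normalization $\nu\colon C^\nu\to C$. The idea is to choose a local equation $f\in\fm_{V,x}^{\ord_x E}$ of $E$ at $x$ and, for each $y\in\nu^{-1}(x)$, observe that $\nu^*f$ lies in $(\fm_{V,x}\mathcal{O}_{C^\nu,y})^{\ord_x E}$. Hence its vanishing order at $y$ in the DVR $\mathcal{O}_{C^\nu,y}$ is at least $\ord_x(E)$ times the order of $\fm_{V,x}\mathcal{O}_{C^\nu,y}$. Summing these lower bounds weighted by $[k(y):k(x)]$ over $y\in\nu^{-1}(x)$ recovers $\mult_x C$ on the right, via the standard expression of the Hilbert--Samuel multiplicity of a curve at a closed point through its normalization, while the contributions to $E.C=\deg\nu^*\cO_V(E)$ from points of $C^\nu$ outside $\nu^{-1}(x)$ are non-negative since $E$ is effective.

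The main obstacle, if any, is the bookkeeping in the integral inequality when $C$ is singular at $x$, since one must carefully identify the sum of local contributions with $\mult_x C$; but this identification is standard and uses only that $\fm_{C,x}=\fm_{V,x}\mathcal{O}_{C,x}$. Once that is in hand, the $\bQ$-divisor version is a formal passage to the limit, with no further difficulty coming from possible singularities of $V$ at $x$, since $\Ard{x}{D}$ is precisely designed to absorb them.
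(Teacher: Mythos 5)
Your proof is correct, and its overall skeleton (reduce to an integral representative $D'$, bound $(lD').C$ from below, divide by $la$ and pass to the limit defining $\Ard{x}{D}$) is exactly the paper's. Where you diverge is in the key integral inequality $E.C\geq \ord_x(E)\cdot\mult_xC$: the paper stays on $C$ itself, bounding $E.C$ below by $\length\left(\factor{\cO_{C,x}}{f\cO_{C,x}}\right)$ and then invoking the length--multiplicity inequality for an element $f\in\fm_{C,x}^r$ in a one-dimensional local ring (\cite[Theorem 14.10]{Matsumura_Commuatative_ring_theory}), whereas you push everything to the normalization $C^\nu$, bound the valuation of $\nu^*f$ at each $y\in\nu^{-1}(x)$ by $r\cdot\ord_y(\fm_{V,x}\cO_{C^\nu,y})$, and reassemble $\mult_xC$ via the standard formula expressing the Hilbert--Samuel multiplicity of a reduced curve singularity through its normalization. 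Both are standard; your route is slightly more self-contained in that it only uses valuations in DVRs and the (finite-length-cokernel) comparison of $\cO_{C,x}$ with its normalization, at the cost of having to justify the identity $\mult_xC=\sum_{y\in\nu^{-1}(x)}[k(y):k(x)]\,\ord_y(\fm_{C,x}\cO_{C^\nu,y})$, which plays the role that Matsumura's theorem plays in the paper. The paper's version is marginally shorter because the degree of $\nu^*(\cO_V(E)|_C)$ is bounded below by a single local length on $C$ in one step. Either way, no gap: your use of $C\not\subseteq\Supp D$ to guarantee that the off-$x$ contributions are non-negative, and the reduction of the $\bQ$-divisor case to the integral one, match the paper.
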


\begin{proof}
    It suffices to prove that if $D$ is an integral divisor, then
    \begin{align*}
        D.C\geq\ord_x(D)\cdot\mult_x C;
    \end{align*}
    the assertion then follows by passing to a limit. Now we follow \cite[Example 2.2.3]{Murayama_Seshadri_constants_and_fujitas_conjecture_via_positive_characteristic_methods}. Let $f\in \cO_{V,x}$ be a local equation of $D$. Viewing $D|_C$ as a locally principal closed subscheme of $C$, we then have
    \begin{align*}
        D.C\geq\length_{\cO_{C,x}}(\cO_{D|_C,x})=\length\left(\factor{\cO_{C,x}}{f\cdot\cO_{C,x}}\right),
    \end{align*}
    where we used \cite[Definition 15.29]{Görtz_Wedhorn_Algebraic_geometry_I}. Now, if we denote $r=\ord_x(D)$, then we have $f\in\fm_{V,x}^r$ and thus $f\cdot\cO_{C,x}\subseteq\fm_{C,x}^r$. To conclude, we obtain by \cite[Theorem 14.10]{Matsumura_Commuatative_ring_theory} that
    \begin{align*}
        D.C\geq\length\left(\factor{\cO_{C,x}}{f\cdot\cO_{C,x}}\right)\geq r\cdot\mult_x C,
    \end{align*}
    so we are done.
\end{proof}

The above lemma states that we can bound from below the ratio $D.C/\mult_xC$, at least if $C\not\subseteq\Supp D$. As obtaining such bounds is exactly what we are after, this suggests that we should optimize the number $\Ard{x}{D}$ on the right hand side. This is what we do in the following lemma; it is a well known consequence of asymptotic Riemann--Roch.

\begin{lemma}\label{lem:div_high_mult_irred}
    Let $X$ be a variety of dimension $n$, let $A$ be an ample divisor, $V\subseteq X$ a subvariety of dimension $d>0$, and $x\in V$ a closed point. Then for all $\delta>0$, there exists $D\in|A|_{\bQ}$ with $V\not\subseteq\Supp D$ and
    \begin{align*}
        \Ard{x}{D|_V}>\left(\frac{A^d.V}{\mult_xV}\right)^{1/d}-\delta.
    \end{align*}
\end{lemma}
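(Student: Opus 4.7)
The plan is to exhibit a single integral divisor $E \sim lA$ with $V \not\subseteq \Supp E$ whose restriction to $V$ vanishes at $x$ to order $r \approx l\bigl(A^d.V/\mult_x V\bigr)^{1/d}$, and then set $D = (1/l)E \in |A|_{\bQ}$. The existence of such $E$ comes from a classical dimension count: compare asymptotic Riemann--Roch on $V$ against the Hilbert--Samuel expansion at $x$.

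First, I would invoke asymptotic Riemann--Roch on $V$. Since $A$ is ample on $X$, its pullback $A|_V$ is ample on the projective scheme $V$, so by Serre vanishing, for $l \gg 0$,
\begin{align*}
    h^0(V, lA|_V) \;=\; \chi(V, lA|_V) \;=\; \frac{A^d.V}{d!}\,l^d \;+\; O(l^{d-1}),
\end{align*}
while the vanishing $H^1(X, \mathcal{I}_V \otimes \mathcal{O}_X(lA)) = 0$ for $l \gg 0$ gives surjectivity of the restriction map $H^0(X, lA) \to H^0(V, lA|_V)$. Second, by the very definition of $\mult_x V$ (\autoref{def:Hilbert-Samuel-multiplicity}),
\begin{align*}
    \length \bigl(\mathcal{O}_{V,x}/\fm_{V,x}^r\bigr) \;=\; \frac{\mult_x V}{d!}\,r^d \;+\; O(r^{d-1}),
\end{align*}
and imposing that a section $s \in H^0(V, lA|_V)$ satisfy $s \in \fm_{V,x}^r \cdot (lA|_V)_x$ (i.e.\ vanish to order at least $r$ at $x$) is a linear condition of codimension at most $\length(\mathcal{O}_{V,x}/\fm_{V,x}^r)$.

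Now fix $c := \bigl(A^d.V/\mult_x V\bigr)^{1/d} - \delta/2$ and set $r = r(l) := \lfloor c l \rfloor$. Comparing leading terms shows that for $l$ sufficiently large, the subspace of sections in $H^0(V, lA|_V)$ vanishing to order $\geq r$ at $x$ has strictly positive dimension, so I can pick a nonzero such $s$. Lifting $s$ to $\tilde s \in H^0(X, lA)$ via the surjectivity above, set $E := \opdiv(\tilde s)$ and $D := (1/l)E \in |A|_{\bQ}$. Since $\tilde s|_V = s \neq 0$, we have $V \not\subseteq \Supp E$, and $E|_V = \opdiv(s)$ satisfies $\ord_x(E|_V) \geq r$. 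The sup characterization of the asymptotic order in \autoref{rem:properties_asymptotic_order} then yields
\begin{align*}
    \Ard{x}{D|_V} \;\geq\; \frac{\ord_x(E|_V)}{l} \;\geq\; \frac{r}{l} \;\geq\; c - \frac{1}{l} \;>\; \left(\frac{A^d.V}{\mult_x V}\right)^{1/d} - \delta
\end{align*}
as soon as $l > 2/\delta$.

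I do not anticipate any serious obstacle: each of the three inputs (asymptotic Riemann--Roch on the possibly singular $V$, the Hilbert--Samuel expansion, and Serre-vanishing surjectivity of restriction) is standard for $A|_V$ ample. The one subtlety is that $V$ need not be smooth, so one must use the Hilbert--Samuel definition of $\mult_x V$ directly rather than trying to identify it with an order of vanishing, and correspondingly appeal to the sup form of $\Ard{x}{\cdot}$ (which is valid even at singular points) to convert the bound $\ord_x(E|_V) \geq r$ into the desired bound on $\Ard{x}{D|_V}$.
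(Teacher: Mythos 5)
Your proposal is correct and follows essentially the same route as the paper: both compare the asymptotic Riemann--Roch expansion of $h^0(V,lA|_V)$ with the Hilbert--Samuel expansion of $\length(\cO_{V,x}/\fm_{V,x}^{\lfloor \sigma l\rfloor})$ to produce a section vanishing to order roughly $\sigma l$ at $x$, lift it to $X$ by Serre vanishing, and conclude via the sup characterization of $\Ard{x}{\cdot}$. The only cosmetic difference is that the paper phrases the dimension count as the non-injectivity of the restriction map to the thickened point $V^{(l)}$ rather than as a codimension bound on a linear condition.
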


\begin{proof}
    The following proof is a simplified version of the proof of \cite[Lemma 3.2]{Cascini_Tanaka_Xu_On_base_point_freeness_in_positive_characteristic}. Let $\sigma$ be a positive real number strictly smaller than $\sigma_0\coloneqq\left(\frac{A^d.V}{\mult_xV}\right)^{1/d}$. Let $V^{(l)}$ be the closed subscheme of $V$ cut out by $\cI_{x}^{\lfloor\sigma l\rfloor}$, where $\cI_{x}$ is the ideal sheaf of $\{x\}\subset V$. Consider the restriction map
    \begin{align*}
        H^0(V,\cO_X(lA)|_V)\to H^0\left(V^{(l)},\cO_X(lA)|_{V^{(l)}}\right).
    \end{align*}
    We want to argue that for $l$ large enough, this map has a non-trivial kernel. On the one hand, notice that
    \begin{align*}
        h^0(V,\cO_X(lA)|_V)=\frac{A^d.V}{d!}l^d+(\text{lower order terms}),
    \end{align*}
    by \cite[Example 1.2.19]{Lazarsfeld_Positivity_in_algebraic_geometry_I}. On the other hand, since $\cO_V/\cI_{x}^{\lfloor\sigma l\rfloor}$ is a skyscraper sheaf concentrated at $x$ and $\cO_X(lA)|_V$ is locally free, we have an isomorphism $\cO_X(lA)|_{V^{(l)}}\cong \cO_{V,x}/\fm_{V,x}^{\lfloor\sigma l\rfloor}$, and thus
    \begin{align*}
        h^0\left((V^{(l)},\cO_X(lA)|_{V^{(l)}}\right)&= \length(\cO_{V,x}/\fm_{V,x}^{\lfloor\sigma l\rfloor})\\
        &=\frac{\mult_x V}{d!}(\sigma l)^d+(\text{lower order terms}).
    \end{align*}
    Now, since we chose $\sigma <\sigma_0$, it follows that $h^0(V,\cO_X(lA)|_V)> h^0\left(V^{(l)},\cO_X(lA)|_{V^{(l)}}\right)$ for $l\gg 0$. Hence, for $l$ large enough, the kernel of the restriction is non-empty. Furthermore, by Serre vanishing, we find that for $l\gg 0$, there exists $s\in H^0(X,\cO_X(lA))$ with $s|_V\neq 0$ and $s|_{V^{(l)}}=0$.

    Set $D\coloneqq\frac{1}{l}\operatorname{div}(s)$. Then, by construction, we have
    \begin{align*}
        \Ard{x}{D|_V}\geq\frac{\ord_x(lD|_V)}{l}\geq\frac{\lfloor\sigma l\rfloor}{l}
    \end{align*}
    So, choosing $\sigma$ close enough to $\sigma_0$ and $l$ large enough so that $\lfloor\sigma l\rfloor>(\sigma_0-\delta)l$, we obtain $\Ard{x}{D|_Y}>\sigma_0-\delta$.
\end{proof}

Finally, we will also need the following geometric characterization of the asymptotic order.

\begin{lemma}\label{lem:blow-up_formula_restricted_divisor}
    Let $V$ be a variety and let $x\in V$ be a closed point. Let $D$ be an effective $\bQ$--divisor and let $s<\Ard{x}{D}$ be a rational number. Finally, consider the blow--up $\pi\colon\wt{V}\to V$ and denote by $E$ the exceptional divisor. Then $\pi^*D-sE$ is an effective $\bQ$--divisor on $\wt{V}$.
\end{lemma}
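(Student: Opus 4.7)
The plan is to reduce to the case of integral divisors via the supremum formula for $\Ard{x}{\cdot}$, and then use the standard fact that if a local equation of an integral divisor lies in $\fm_{V,x}^r$, then its pullback under the blow-up of $x$ vanishes along $E$ to order at least $r$.

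Concretely, the first step is to exploit the third bullet of \autoref{rem:properties_asymptotic_order}: since $s<\Ard{x}{D}$, the supremum characterisation produces a positive integer $a$ and an integral effective divisor $D'$ with $D=(1/a)D'$ and $\ord_x(D')/a>s$. This reduces the problem to showing that $\pi^*D' - r E$ is an effective integral divisor on $\wt V$ whenever $D'$ is an integral effective divisor with $r=\ord_x(D')$, because then
\begin{align*}
    \pi^*D - sE \;=\; \tfrac{1}{a}\bigl(\pi^*D' - r E\bigr) \;+\; \bigl(\tfrac{r}{a}-s\bigr)E
\end{align*}
is a sum of effective $\bQ$-divisors, hence effective.

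The second step is the integral claim itself. Working locally around $x$, pick a local equation $f\in\cO_{V,x}$ of $D'$, so that $f\in\fm_{V,x}^r\setminus\fm_{V,x}^{r+1}$ by definition of $\ord_x(D')$. Since the blow-up of $x$ satisfies $\pi^{-1}\fm_{V,x}\cdot\cO_{\wt V}\subseteq\cO_{\wt V}(-E)$, we obtain
\begin{align*}
    \pi^{-1}\fm_{V,x}^r\cdot\cO_{\wt V} \;\subseteq\; \cO_{\wt V}(-rE).
\end{align*}
In particular, $\pi^*f$, viewed as a local section of $\cO_{\wt V}$, lies in $\cO_{\wt V}(-rE)$, which is precisely the statement that $\pi^*D' - rE$ is an effective Cartier divisor in a neighbourhood of $\pi^{-1}(x)$. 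Away from $\pi^{-1}(x)$ the map $\pi$ is an isomorphism and $E$ does not appear, so $\pi^*D'-rE$ is trivially effective there. Patching these two open sets yields that $\pi^*D' - rE$ is globally effective, completing the proof.

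I do not expect any serious obstacle here; the only subtle point is making sure the argument does not implicitly assume $V$ is normal or smooth at $x$. The use of the supremum in \autoref{rem:properties_asymptotic_order} (rather than the limit in \autoref{def:order_Q-divisor}) is important, because it lets us pick a single representative $D'$ realising the required order without needing a limit argument on $\wt V$. The blow-up estimate $\pi^{-1}\fm_{V,x}\cdot\cO_{\wt V}\subseteq\cO_{\wt V}(-E)$ is a formal consequence of the definition of $\wt V$ as $\Proj$ of the Rees algebra of $\fm_{V,x}$, so it requires no regularity hypothesis on $V$ at $x$.
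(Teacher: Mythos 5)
Your proof is correct, and the overall skeleton matches the paper's: both arguments first use the supremum characterisation of $\Ard{x}{\cdot}$ (Fekete's lemma) to pick a single integral representative $D'=aD$ with $\ord_x(D')/a>s$, reduce to showing that $\pi^*D'-rE$ is effective for $r=\ord_x(D')$, and absorb the slack into the effective term $(r/a-s)E$. Where you diverge is in the proof of that integral claim. The paper embeds $V$ into $\bA^N$ with $x=0$, realises $\wt V$ inside the blow-up of $\bA^N$ via the standard charts $j_i$, writes the local equation as $f=g+h$ with $g\in\fm_x^r$ and $h$ in the ideal of $V$, and checks by hand that $j_i^\sharp(f)=x_i^rg_i+h_i$, so that dividing by $x_i^r$ leaves a regular function. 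You instead invoke the universal property of the blow-up of the ideal sheaf $\cI_x$: the identity $\cI_x\cdot\cO_{\wt V}=\cO_{\wt V}(-E)$, hence $\cI_x^r\cdot\cO_{\wt V}=\cO_{\wt V}(-rE)$, immediately gives that $\pi^\sharp f$ is divisible by the $r$-th power of a local equation of $E$. Your route is shorter and more intrinsic, and you are right that it needs no regularity of $V$ at $x$ (the exceptional divisor of any blow-up of a coherent ideal is an effective Cartier divisor by construction); the paper's chart computation establishes the same divisibility by elementary means without appealing to the $\Proj$ description, at the cost of choosing an embedding. You are also right to flag that the supremum form of \autoref{rem:properties_asymptotic_order}, rather than the limit in \autoref{def:order_Q-divisor}, is what makes the reduction to a single integral representative work — the paper glosses over this with the phrase ``by passing to a suitable multiple,'' and your version makes that step explicit.
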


\begin{proof}
    By passing to a suitable multiple of $D$, it suffices to prove that if $D$ is an integral effective divisor and $r=\ord_x D$, then $\pi^*D-rE$ is effective. The statement is clearly local around $x$, so we may assume that $V$ is affine and that $D$ is cut out by a global equation $f\in \Gamma(V,\cO_V)$ satisfying $f\in \fm_x^r$. Fix an embedding $V\subseteq\bA^N$ and assume without loss of generality that $x=0$. Let $I\subseteq k[x_1,\ldots,x_N]$ be the ideal of $V$ inside $\bA^N$. Let $\{j_i\colon U_i\to\bA^N\}_i$ be the standard open cover of $\wt{\bA^N}\subseteq \bA^N\times\bP^{N-1}$, i.e.,\
    \begin{align*}
        j_i\colon\bA^N&\to\wt{\bA^N}\subseteq \bA^{N}\times\bP^{N-1}\\
        (a_1,\ldots,a_N)&\mapsto \left((a_ia_1,\ldots,a_ia_{i-1},a_i,a_ia_{i+1},\ldots,a_ia_N),[a_1:\ldots:a_{i-1}:1:a_{i+1}:\ldots:a_N]\right).
    \end{align*}
    It is straightforward to compute that the exceptional divisor in $\wt{V}$ can be written as $E=\{(\wt{V}\cap U_i,x_i)\}$. We can view $D$ as being cut out by a polynomial $f\in k[x_1,\ldots,x_N]$ such that $f\in \fm_{x}^r+I$, so that we can write $f=g+h$ with $g\in \fm_{x}^r$ and $h\in I$. Then we have
    \begin{align*}
        j_i^{\sharp}(f)=f(x_ix_1,\ldots,x_ix_{i-1},x_i,x_ix_{i+1},\ldots,x_ix_N)=x_i^rg_i+h_i
    \end{align*}
    for some $g_i\in k[x_1,\ldots,x_N]$ and $h_i$ an element of the ideal of $\wt{X}\cap U_i$ in $U_i$. Hence, we can write $\pi^*D$ as $\{(\wt{X}\cap U_i,x_i^rg_i)\}$, and thus
    \begin{align*}
        \pi^*D-rE=\{(\wt{X}\cap U_i,g_i)\}
    \end{align*}
    is effective.
\end{proof}

\subsection{Intersecting adjoint divisors and curves}\label{subsec:intersecting_adjoints}
In this subsection, we study some intersection--theoretical properties of adjoint divisors. It is well known that if $X$ is a smooth variety of dimension $n$ and $A$ is an ample divisor on $X$, then $K_X+(n+1)A$ is nef. This is often implicitly stated as part of the Cone Theorem, but it is actually already a consequence of Mori's Bend--and--Break technique. We will use a remarkably general version of this technique, which can be found in \cite[Section II.5]{Kollar_Rational_curves_on_algebraic_varieties}.
\begin{lemma}\label{lem:bend-and-break}
    Let $V$ be an lci variety of dimension $d$ and let $M$ be a nef divisor on $V$. Assume that $C$ is a curve in $V$ that intersects the regular locus and such that $(K_V+2dM).C<0$. Then $C$ is in the exceptional locus $\bE(M)$ of $M$ (i.e.,\ the union of all subvarieties on which $M$ is not big).
\end{lemma}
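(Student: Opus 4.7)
The first step is to dispose of a trivial case. If $M$ is not big on $V$, then $V$ itself is a subvariety on which $M$ fails to be big, so $V\subseteq\bE(M)$ and the conclusion is immediate. Thus we may assume throughout that $M$ is nef and big on $V$, and we argue by contradiction: suppose $C\not\subseteq\bE(M)$, with the goal of producing a subvariety $W\supseteq C$ on which $M$ fails to be big.

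The key input is the general version of Bend-and-Break in Koll\'ar II.5, which applies precisely to the setting at hand: $V$ projective lci, $M$ only assumed nef, and $C$ meeting the smooth locus. Together with the numerical hypothesis $(K_V+2dM).C<0$, this machinery produces, through each smooth point $p\in C\cap V_{\mathrm{sm}}$, a rational curve $R_p\subseteq V$ with $M.R_p\leq 2d$. The uniform $M$-degree bound, combined with the bigness of $M$, confines the $R_p$ to a bounded family in the Chow scheme. Letting $T$ be an irreducible component of the parameter space whose universal evaluation dominates $C$, I would take $W$ to be the closure in $V$ of the image of the corresponding universal curve. Then $W\supseteq C$ is an irreducible closed subvariety, swept out by rational curves of $M$-degree at most $2d$.

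The main obstacle is then to promote this covering property into non-bigness of $M|_W$. Being swept out by rational curves of bounded $M$-degree is not, on its own, enough for non-bigness (for instance $\bP^n$ is covered by lines yet the hyperplane class is big), so the argument must exploit finer structure of the family produced by Koll\'ar II.5, in particular the existence of a \emph{minimal} dominating family of rational curves on $W$ whose $M$-degree is extremal. By combining the deformation-theoretic dimension count for such a minimal family with adjunction on $W$ and the extremality built into the choice of $2d$ in the hypothesis, one should be able to conclude that the top self-intersection $(M|_W)^{\dim W}$ vanishes, so that $M|_W$ is not big. Once this is granted, $W\subseteq\bE(M)$ by definition of the exceptional locus, giving $C\subseteq W\subseteq\bE(M)$, a contradiction that finishes the proof.
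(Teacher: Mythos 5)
Your proof has a genuine gap, and it stems from quoting the Bend-and-Break degree bound in a weakened form. Kollár's Theorem II.5.8 (applicable here by Remark II.5.15, since $M$ nef and $(K_V+2dM).C<0$ force $K_V.C<0$) produces through every point $x$ of $C$ a rational curve $L_x$ with
\[
M.L_x\;\leq\;2d\,\frac{M.C}{-K_V.C},
\]
not merely $M.L_x\leq 2d$. The hypothesis $(K_V+2dM).C<0$ says precisely that $2d\,M.C<-K_V.C$, so the right-hand side above is strictly less than $1$; since $M$ is nef, $M.L_x$ is a non-negative integer, hence $M.L_x=0$. A curve on which a nef divisor has degree zero is itself a subvariety on which $M$ fails to be big, so $x\in L_x\subseteq\bE(M)$ for every $x\in C$, and the lemma follows immediately. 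By discarding the ratio $M.C/(-K_V.C)$ you lose exactly the information that the coefficient $2d$ in the hypothesis was calibrated to exploit.

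As a consequence, the remainder of your argument --- assembling the curves $R_p$ into a bounded family, passing to a subvariety $W$ swept out by them, and trying to show $(M|_W)^{\dim W}=0$ via a minimal dominating family --- is both unnecessarily heavy and, as written, incomplete: you yourself observe that bounded $M$-degree of a covering family does not imply non-bigness, and the decisive step is left at the level of ``one should be able to conclude.'' That is precisely where the proof must close, and it is not closed. (The preliminary reduction to $M$ big and the contradiction setup are harmless but also unneeded.) The fix is not to strengthen the family argument but to use the sharp degree bound, which reduces everything to the one-line observation $M.L_x=0$.
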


\begin{proof}
    As $(K_V+2dM).C<0$ and $M$ is nef, we must have $K_V.C<0$. By Remark 5.15 in \emph{loc.cit.}, Theorem 5.8 applies to the map $C^{\nu}\to V$, so in particular there exists a rational curve $L_x\subseteq V$ through any point $x\in C$ such that
    \begin{align*}
        M.L_x\leq 2d\frac{M.C}{-K_V.C}.
    \end{align*}
    Note that as $(K_V+2dM).C<0$, the right hand side is strictly smaller than $1$, and as $M.L_x$ is a non-negative integer, we obtain $M.L_x=0$. In particular, we have $x\in\bE(M)$, and as this holds for all $x\in C$, we conclude $C\subseteq\bE(M)$.
\end{proof}

As a corollary, we obtain the following useful fact.
\begin{corollary}\label{cor:K_X+2(dimX)A_nef}
    Let $V$ be an lci variety of dimension $d$, let $A$ be an ample divisor on $V$ and $C\subseteq V$ a curve intersecting the regular locus of $V$. Then $(K_V+2dA).C\geq 0$. In particular, if $V$ has isolated singularities, then $K_V+2dA$ is nef.
\end{corollary}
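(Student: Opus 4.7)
The plan is to deduce this directly from \autoref{lem:bend-and-break} applied with $M = A$, turning it into a contrapositive argument. Suppose for contradiction that $(K_V + 2dA).C < 0$. Since $A$ is ample, in particular nef, the hypotheses of \autoref{lem:bend-and-break} are satisfied, so we conclude that $C$ lies in the exceptional locus $\bE(A)$.

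The key observation is then that $\bE(A) = \emptyset$: since $A$ is ample, its restriction to any positive--dimensional subvariety $W \subseteq V$ is again ample (hence big), so no subvariety witnesses a failure of bigness. This contradicts $C \subseteq \bE(A)$ and yields the first assertion $(K_V + 2dA).C \geq 0$.

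For the second assertion, I would argue that the isolated--singularity hypothesis forces every curve $C \subseteq V$ to meet the regular locus: the singular locus of $V$ is zero--dimensional, hence finite, while $C$ is one--dimensional, so $C \setminus \Sing(V)$ is nonempty. Therefore the previous paragraph applies to every curve $C \subseteq V$, which gives $(K_V + 2dA).C \geq 0$ for all curves, i.e., $K_V + 2dA$ is nef.

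I do not expect any real obstacle here: the only subtle point is recognising that $\bE(A) = \emptyset$ when $A$ is ample (rather than merely nef and big), and that the isolated--singularity condition automatically ensures the \emph{a priori} restrictive hypothesis ``$C$ intersects the regular locus'' in \autoref{lem:bend-and-break}.
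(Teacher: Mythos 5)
Your argument is correct and is exactly the intended one: the paper states this as an immediate corollary of \autoref{lem:bend-and-break} without writing out a proof, and the route you take (apply the lemma with $M=A$, note $\bE(A)=\emptyset$ for $A$ ample, and observe that isolated singularities force every curve to meet the regular locus) is precisely what is implicit there.
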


\begin{remark}\label{rem:K_X+2(dimX)A_nef_surface}
    For an lci surface $S$, we actually have $(K_S+3A).C\geq 0$ for curves intersecting the regular locus. Indeed, denote by $\psi\colon\wht{S}\to S^{\nu}$ the minimal resolution of the normalization of $S$ (see \cite[Theorem 4-6-2]{Matsuki_Introduction_to_the_Mori_program}), and by $\wht{C}$ the strict transform of $C$ in $\wht{S}$. By the proof of \autoref{lem:adjunction_inequality_surfaces_auxilliary}, we have 
    \begin{align*}
        K_S.C\geq K_{\wht{S}}.\wht{C}.
    \end{align*}
    Therefore
    \begin{align*}
        (K_S+3A).C\geq (K_{\wht{S}}+3\psi^*\nu^*A).\wht{C}\geq 0,
    \end{align*}
    where the last inequality follows from the Cone Theorem on $\wht{S}$ (or, e.g.,\ \cite{Jovinelly_Lehmann_Riedl_Optimal_bounds_in_Bend-and-Break}).
\end{remark}

We will conclude this subsection by proving analogs of \eqref{eq:adjunction_intro} in the cases we need.

\begin{lemma}\label{lem:Seshadri_hypersurface_high_mult}
    Let $X$ be a smooth variety of dimension $n$, let $x\in X$ be a closed point and $A$ and ample divisor on $X$. Let $V\subseteq X$ be a prime divisor and $C\subseteq V$ an integral curve through $x$, intersecting the regular locus of $V$. Then
    \begin{align*}
        (K_X+V+2(n-1)A).C\geq(\mult_x V-n+1)\mult_x C.
    \end{align*}
\end{lemma}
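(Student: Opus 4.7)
The plan is to blow up $X$ at $x$ and transfer the adjunction calculation onto the strict transform of $V$, where Lemma \ref{lem:bend-and-break} (Bend--and--Break) can be invoked. Let $\pi\colon\tX\to X$ be the blow-up at $x$, with exceptional divisor $E$, and let $\tV$ and $\tC$ denote the strict transforms of $V$ and $C$. Since $X$ is smooth, the standard formulas
\begin{align*}
    K_{\tX}=\pi^*K_X+(n-1)E,\qquad \pi^*V=\tV+(\mult_xV)\,E
\end{align*}
hold, $\tV$ is Cartier in the smooth $\tX$ and therefore lci, and adjunction gives $K_{\tV}=(K_{\tX}+\tV)|_{\tV}$. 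Combining with the projection formula and the standard identity $E\cdot\tC=\mult_xC$, we obtain
\begin{align*}
    (K_X+V+2(n-1)A)\cdot C \;=\; \bigl(K_{\tV}+2(n-1)M\bigr)\cdot\tC \;+\; (\mult_xV-n+1)\,\mult_xC,
\end{align*}
where $M\coloneqq(\pi^*A)|_{\tV}$. The desired inequality therefore reduces to showing that $(K_{\tV}+2(n-1)M)\cdot\tC\geq 0$.

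For this, I would apply Lemma \ref{lem:bend-and-break} to the lci variety $\tV$ (of dimension $n-1$), the nef divisor $M$, and the curve $\tC$. Since $\Reg(V)\subseteq V$ is a non-empty open subset and $C$ is an irreducible curve meeting it, $C\cap\Reg(V)$ is infinite and thus contains some point $p\neq x$; above such a point $\pi|_{\tV}\colon\tV\to V$ restricts to an isomorphism, so $\tC$ meets the regular locus of $\tV$. If we had $(K_{\tV}+2(n-1)M)\cdot\tC<0$, then by Lemma \ref{lem:bend-and-break} it would follow that $\tC\subseteq\bE(M)$. However, $M=(\pi|_{\tV})^*(A|_V)$ is the pullback of the ample divisor $A|_V$ under the birational morphism $\pi|_{\tV}$, so any subvariety on which $M$ fails to be big must be contracted by $\pi|_{\tV}$, giving $\bE(M)\subseteq E\cap\tV$. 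This would force $\tC\subseteq E$, contradicting the fact that $\pi(\tC)=C$ is a curve rather than the single point $x$. Thus $(K_{\tV}+2(n-1)M)\cdot\tC\geq 0$, finishing the proof.

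The main point to get right is the applicability of Bend--and--Break on $\tV$: one must confirm that $\tV$ is lci, that $M$ is nef (automatic for a pullback of an ample divisor), and most delicately that $\tC$ meets the regular locus of $\tV$, which is where the hypothesis $C\cap\Reg(V)\neq\emptyset$ is used. The identification of $\bE(M)$ with the locus contracted by $\pi|_{\tV}$ is the only nontrivial input beyond Lemma \ref{lem:bend-and-break} itself; the rest is bookkeeping with standard blow-up formulas, which have been arranged precisely so that the two multiplicities $\mult_xV$ and $\mult_xC$ appear in the form asserted in the statement.
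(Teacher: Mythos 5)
Your proof is correct and follows essentially the same route as the paper's: blow up at $x$, use the standard discrepancy and multiplicity formulas to isolate the term $(\mult_xV-n+1)\mult_xC$, reduce to the nonnegativity of $(K_{\tV}+2(n-1)(\pi^*A)|_{\tV})\cdot\tC$ via adjunction, and rule out the negative case with Lemma \ref{lem:bend-and-break} since $\bE((\pi^*A)|_{\tV})$ is contained in the exceptional divisor while $\tC$ is not. Your added justification that $\tC$ meets the regular locus of $\tV$ (via a regular point of $V$ on $C$ away from $x$) is a detail the paper leaves implicit, and it is accurate.
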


\begin{proof}
    Let $\pi\colon\wt{X}\to X$ be the blow-up of $X$ at $x$, denote by $E$ the exceptional divisor and by $\wt{V}$ resp.\ $\wt{C}$ the strict transforms of $V$ resp.\ $C$. In addition, to lighten the notation, denote $\mu_V=\mult_xV$ and $\mu_C=\mult_x C$. By the projection formula, we have
    \begin{align*}
        (K_X+V+2(n-1)A).C=\pi^*(K_X+V+2(n-1)A).\widetilde{C}.
    \end{align*}
    Now note that $\pi^*K_X=K_{\widetilde{X}}-(n-1)E$ (\cite[Exercise II.8.5]{Hartshorne_Algebraic_geometry}) and $\pi^*V=\widetilde{V}+\mu_VE$ (\cite[Corollary 6.7.1]{Fulton_Intersection_theory}). Hence
    \begin{align*}
        (K_X+V+2(n-1)A).C=(K_{\widetilde{X}}+\widetilde{V}+2(n-1)\pi^*A).\widetilde{C}+(\mu_V-n+1)E.\widetilde{C}.
    \end{align*}
    Now by \cite[Corollary 6.7.1]{Fulton_Intersection_theory}, if $\ell\subseteq E\cong\bP^{n-1}$ is a line, then
    \begin{align*}
        E.\widetilde{C}=-\mu_CE.\ell=\mu_C,
    \end{align*}
    where we used $-E.\ell=\deg\cO_{E}(1)|_{\ell}=1$. Therefore, we are left with proving that $(K_{\widetilde{X}}+\widetilde{V}+2(n-1)\pi^*A).\widetilde{C}$ is non-negative.

    To do so, note that we have
    \begin{align*}
        (K_{\widetilde{X}}+\widetilde{V}+2(n-1)\pi^*A).\widetilde{C}&=(K_{\widetilde{X}}+\widetilde{V}+2(n-1)\pi^*A)|_{\widetilde{V}}.\widetilde{C}\\
        &=(K_{\widetilde{V}}+2(n-1)(\pi^*A)|_{\widetilde{V}}).\widetilde{C}.
    \end{align*}
    Now denote $M=(\pi^*A)|_{\widetilde{V}}$, and note that $M$ is nef, because pull-backs preserve nefness. Assume by contradiction that $(K_{\widetilde{V}}+2(n-1)(\pi^*A)|_{\widetilde{V}}).\widetilde{C}<0$, then by \autoref{lem:bend-and-break}, we obtain $\widetilde{C}\subseteq \bE(M)$. But as $A$ is ample, we must have $\bE(M)\subseteq E$, and thus we obtain $\wt{C}\subseteq E$, which is a contradiction. Hence, we must have $(K_{\widetilde{V}}+2(n-1)(\pi^*A)|_{\widetilde{V}}).\widetilde{C}\geq 0$, which concludes the proof.
\end{proof}

\begin{remark}\label{rem:bend-and-break}
    By adjunction, \autoref{lem:bend-and-break} shows, in particular, that
    \begin{align*}
        (K_V+2(n-1)A|_V).C\geq (\mult_x V-n+1)\mult_x C.
    \end{align*}
    If $\mult_x V<n$, then this bound is useless, but it grows linearly as $\mult_x V$ tends to infinity. This should be regarded as an example of the general phenomenon that singularities contribute positively to $K_X$.
\end{remark}

Finally, in order to prove \autoref{thmB}, we will need to work on prime divisors $S$ in a smooth threefold $X$. These are in particular local complete intersections, and the following is a version of \eqref{eq:adjunction_intro} in this more general setting.

\begin{lemma}\label{lem:adjunction_inequality_surfaces}
    Let $S$ be an lci surface, $x\in S$ a closed regular point and $C\subset S$ an integral curve through $x$. Let $D$ be an effective $\bQ$--divisor on $S$ such that $\Ard{C}{D}=1$. Then
    \begin{align*}
        (K_S+D).C\geq(\Ard{x}{D}-1)\mult_x C-2.
    \end{align*}
\end{lemma}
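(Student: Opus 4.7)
The plan is to emulate the smooth-surface adjunction heuristic \eqref{eq:adjunction_intro} by ``extracting'' $C$ from $D$ and then treating the two resulting pieces separately. Since $x$ is a regular point of $S$, regularity propagates to the generic point $\eta$ of $C$. Writing $D = (1/a) D'$ with $D'$ an integral effective divisor, the hypothesis $\Ard{C}{D} = 1$ translates (by \autoref{rem:properties_asymptotic_order}) into $\ord_\eta D' = a$. Hence $D$ decomposes as $D = C + D''$ with $D'' := (1/a)(D' - aC)$ an effective $\bQ$-divisor satisfying $C \not\subseteq \Supp D''$, and linearity of intersection splits the problem as
\begin{align*}
    (K_S+D).C = (K_S+C).C + D''.C.
\end{align*}

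For the remainder $D''.C$, I would apply \autoref{lem:intersection_lb_multiplicities} to obtain $D''.C \geq \Ard{x}{D''} \cdot \mu$ with $\mu := \mult_x C$. Then I would use that $\ord_x$ is additive on integral effective divisors at a regular point (since $\operatorname{gr}_{\fm_x} \cO_{S,x}$ is a polynomial ring, hence a domain), which extends linearly to $\bQ$-divisors and yields $\Ard{x}{D''} = \Ard{x}{D} - \Ard{x}{C} = \Ard{x}{D} - \mu$. For the adjoint piece $(K_S + C).C$, since $S$ is lci and $C$ is locally principal near the smooth point $x$, adjunction gives $K_C = (K_S + C)|_C$, so $(K_S + C).C = 2 p_a(C) - 2$. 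The classical multiplicity--delta inequality $p_a(C) \geq \delta_x(C) \geq \binom{\mu}{2}$ then produces $(K_S + C).C \geq \mu^2 - \mu - 2$. Combining the two bounds gives
\begin{align*}
    (K_S+D).C \geq (\mu^2 - \mu - 2) + (\Ard{x}{D} - \mu)\mu = (\Ard{x}{D} - 1)\mu - 2,
\end{align*}
as required.

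The main obstacle is that $C$ is only guaranteed to be Cartier in a neighborhood of $x$ rather than on all of $S$, so the decomposition $D = C + D''$ and the individual intersection numbers $(K_S + C).C$ and $D''.C$ need not make sense globally as written. The natural remedy is to pass to the minimal resolution $\psi\colon \widehat{S} \to S$ of (the normalization of) $S$, as in \autoref{rem:K_X+2(dimX)A_nef_surface}, and run the argument on the strict transform $\widehat{C}$ together with $\psi^*D$. Since $x$ is smooth on $S$, it lies in the locus where $\psi$ is an isomorphism, so both $\mu = \mult_x C$ and $\Ard{x}{D}$ are preserved, while comparing $K_S.C$ with $K_{\widehat{S}}.\widehat{C}$ shifts the inequality only in the favorable direction. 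Verifying these estimates carefully through the resolution is the most delicate part of the argument; the rest is essentially bookkeeping around the two cleanly separated bounds above.
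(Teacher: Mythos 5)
Your argument is correct, and its core — passing to the minimal resolution $\psi\colon\wh{S}\to\bar{S}$ of the normalization, extracting $\wh{C}$ with coefficient one from $\psi^*\nu^*D$ there, and comparing $K_S.C$ with $K_{\wh{S}}.\wh{C}$ — is exactly what the paper does in \autoref{lem:adjunction_inequality_surfaces_auxilliary}. Where you genuinely diverge is in how the multiplicity at $x$ enters: the paper first blows up $S$ at $x$, uses \autoref{lem:blow-up_formula_restricted_divisor} to keep $\pi^*D-sE$ effective for rational $s<\Ard{x}{D}$, and thereby reduces the whole lemma to the crude bound $(K_{\wh{S}}+\wh{C}+D').\wh{C}\geq 2p_a(\wh{C})-2\geq-2$; you skip that blow-up and obtain the quadratic term directly from $p_a(\wh{C})\geq\delta_x(\wh{C})\geq\binom{\mu}{2}$ together with additivity of $\ord_x$ at a regular point, which gives $\Ard{x}{D''}=\Ard{x}{D}-\mu$. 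Both routes are sound and the arithmetic agrees; yours is a little shorter but leans on the classical delta-invariant estimate (itself proved by iterated blow-ups), while the paper's version isolates the blow-up in a reusable lemma and only ever needs $p_a\geq 0$. Two points you flag but should not understate: first, the naive decomposition $D=C+D''$ on $S$ itself is not merely a global bookkeeping issue, since an lci surface need not be $\bQ$-factorial (e.g.\ a cone over a plane cubic), so $C$ and $D''$ may fail to be $\bQ$-Cartier and the individual intersection numbers are genuinely undefined — working on $\wh{S}$ is mandatory, not a convenience; second, the inequality $K_S.C\geq K_{\wh{S}}.\wh{C}$ is precisely the nontrivial content of the paper's auxiliary lemma (the conductor divisor $\bar{B}$ with $\bar{B}.\bar{C}\geq 0$ in Mumford's intersection theory, plus $\psi^*K_{\bar{S}}=K_{\wh{S}}+\sum_i b_iE_i$ with $b_i\geq 0$ for the minimal resolution), so the verification you defer is real work, although \autoref{rem:K_X+2(dimX)A_nef_surface} records it and it does go through in the favorable direction as you expect.
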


\begin{proof}
    To simplify the notation, denote $r\coloneqq\Ard{x}{D}$ and $\mu\coloneqq\mult_xC$, and let $s<r$ be a rational number. Let $\pi\colon\wt{S}\to S$ be the blow-up of $S$ at $x$, denote by $E$ the exceptional divisor and by $\wt{C}$ the strict transform of $C$. Then we have
    \begin{align*}
        (K_S+D).C&=\pi^*(K_S+D).\wt{C}\\
        &=(K_{\wt{S}}-E+\pi^*D).\wt{C}\\
        &=(K_{\wt{S}}+\underbrace{\pi^*D-sE}_{D'\coloneqq})+(s-1)E.\wt{C}
    \end{align*}
    Now note that $D'$ is an effective $\bQ$--divisor on $\wt{S}$ due to \autoref{lem:blow-up_formula_restricted_divisor}, and that $\Ard{\wt{C}}{D'}=1$ (because the local equations of $D$ and $D'$ agree in $\cO_{S,C}\cong \cO_{\wt{S},\wt{C}}$). Furthermore, $\wt{S}$ is still a local complete intersection, as we blew up a regular point, and $\wt{C}$ intersects the regular locus of $\wt{S}$. Hence, by \autoref{lem:adjunction_inequality_surfaces_auxilliary}, we have $(K_{\wt{S}}+D').\wt{C}\geq -2$. As also $E.\wt{C}=\mu$ by \cite[Corollary 6.7.1]{Fulton_Intersection_theory}, we obtain
    \begin{align*}
        (K_S+D).C\geq (s-1)\mu-2.
    \end{align*}
    As this holds for every rational number $s<r$, we obtain the result.
\end{proof}

\begin{lemma}\label{lem:adjunction_inequality_surfaces_auxilliary}
    Let $S$ be an lci surface, let $C$ be an integral curve intersecting the regular locus of $S$ and $D$ an effective $\bQ$--divisor with $\Ard{C}{D}=1$. Then
    \begin{align*}
        (K_S+D).C\geq -2.
    \end{align*}
\end{lemma}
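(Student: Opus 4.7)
The plan is to pull back $K_S+D$ along a resolution of $S$, apply adjunction on the resulting smooth surface, and argue that the extra terms coming from the resolution contribute non-negatively to the intersection with $C$. Let $\nu\colon S^\nu\to S$ be the normalization and $\psi\colon\wht S\to S^\nu$ the minimal resolution, and set $\pi=\nu\circ\psi\colon\wht S\to S$. Since $C$ meets the regular locus of $S$, its generic point $\eta_C$ is regular on $S$, hence normal and smooth; in particular $\pi$ is an isomorphism near $\eta_C$, so the strict transform $\wht C\subseteq\wht S$ is well-defined and $\pi|_{\wht C}\colon\wht C\to C$ is birational. Because $\eta_C$ is a regular point of $S$, the coefficient of $\wht C$ in the effective $\bQ$-divisor $\pi^*D$ equals $\Ard{C}{D}=1$ (using the third bullet of \autoref{rem:properties_asymptotic_order} and the fact that $\pi$ identifies the DVRs at $\eta_C$ and $\eta_{\wht C}$); thus we may write $\pi^*D=\wht C+E$ for some effective $\bQ$-divisor $E$ on $\wht S$ with $\wht C\not\subseteq\Supp E$.

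Next I would analyze $\pi^*K_S-K_{\wht S}$. Since $K_S$ is Cartier on the lci surface $S$, the conductor formula gives $\nu^*K_S=K_{S^\nu}+\Delta_\nu$ with $\Delta_\nu\geq 0$ effective and supported on the preimage of the non-normal locus of $S$. For the minimal resolution $\psi$ of the normal surface $S^\nu$, the divisor $K_{\wht S}$ is $\psi$-nef (no $(-1)$-exceptional curves, by minimality), and combined with the negative definiteness of the intersection matrix of the exceptional divisors (Mumford's lemma), this forces every discrepancy of $\psi$ to be non-positive; hence $F\coloneqq\psi^*K_{S^\nu}-K_{\wht S}$ is effective and supported on the exceptional locus. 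Assembling,
\[
    \pi^*(K_S+D)=K_{\wht S}+\wht C+G,
\]
where $G\coloneqq E+\psi^*\Delta_\nu+F\geq 0$ is effective and satisfies $\wht C\not\subseteq\Supp G$; indeed, $\eta_{\wht C}$ maps isomorphically to $\eta_C\in S_{\mathrm{reg}}$, which lies outside both the non-normal locus of $S$ and the exceptional locus of $\pi$.

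Finally, the projection formula yields $(K_S+D).C=\pi^*(K_S+D).\wht C$. Adjunction on the smooth surface $\wht S$ (where $\wht C$ is automatically Cartier) gives $(K_{\wht S}+\wht C).\wht C=2p_a(\wht C)-2\geq -2$, and $G.\wht C\geq 0$ by the preceding paragraph. Summing, $(K_S+D).C\geq -2$, as required. The main obstacle is the simultaneous control of the discrepancies of $\psi$ and of the conductor of $\nu$, together with the verification that $\Supp G$ avoids $\wht C$; all of these ultimately rest on the hypothesis that $C$ meets the regular locus of $S$, which forces $\pi$ to be an isomorphism near $\eta_C$ and in particular keeps $\eta_{\wht C}$ out of the exceptional and non-normal loci.
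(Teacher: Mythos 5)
Your argument is essentially the paper's own proof: normalization followed by the minimal resolution, the conductor formula to compare $\nu^*K_S$ with $K_{S^\nu}$, non-positivity of the discrepancies of the minimal resolution, splitting the pullback of $D$ as $\wht C$ plus an effective part not containing $\wht C$, and adjunction on $\wht S$. The only cosmetic difference is that you assemble all the error terms into one effective divisor $G$ on $\wht S$, whereas the paper estimates step by step (partly on $S^\nu$ via Mumford's intersection theory); the content is the same.
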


\begin{proof}
    Let $\nu\colon \bar{S}\to S$ be the normalization of $S$ and $\psi\colon\widehat{S}\to \bar{S}$ the minimal resolution of $\bar{S}$ (\cite[Theorem 4-6-2]{Matsuki_Introduction_to_the_Mori_program}). As $C$ intersects the regular locus, we can form the strict transforms $\bar{C}\subset\bar{S}$ and $\widehat{C}\subset\widehat{S}$ of $C$. By \cite[5.7, p.191]{Kollar_Singularities_of_the_minimal_model_program}, if $\bar{B}$ denotes the Weil divisor cut out by the conductor ideal in $\bar{S}$, we have a natural isomorphism
    \begin{align*}
        \omega_{\bar{S}}\cong(\nu^*\omega_S)(-\bar{B}).
    \end{align*}
    This means that the difference between the Weil divisors $K_{\bar{S}}$ and $\nu^*K_S-\bar{B}$ is a principal Cartier divisor. In particular, we then have
    \begin{align*}
        K_{\bar{S}}.\bar{C}=(\nu^*K_S-\bar{B}).\bar{C}
    \end{align*}
    in the sense of Mumford's intersection theory (see \cite[Examples 7.1.16 and 8.3.11]{Fulton_Intersection_theory}). Now by \cite[Theorem 4-6-2]{Matsuki_Introduction_to_the_Mori_program}, the Mumford pull-back of $K_{\bar{S}}$ along $\psi$ can be written as
    \begin{align*}
        \psi^*K_{\bar{S}}=K_{\widehat{S}}+\sum_i b_iE_i,
    \end{align*}
    where the $E_i$'s are the exceptional curves and $b_i\geq 0$ for all $i$. Hence, we have
    \begin{align*}
        K_{\bar{S}}.\bar{C}=\psi^*K_{\bar{S}}.\widehat{C}=(K_{\widehat{S}}+\sum_i b_iE_i).\widehat{C}\geq K_{\widehat{S}}.\widehat{C}.
    \end{align*}
    On the other hand, we have
    \begin{align*}
        K_S.C=(\nu^*K_S).\bar{C}\geq (\nu^*K_S-\bar{B}).\bar{C}=K_{\bar{S}}.\bar{C},
    \end{align*}
    where we used the projection formula and \autoref{rem:mumford_proper_intersection}. By adding $D$, we obtain
    \begin{align*}
        (K_S+D).C\geq (K_{\widehat{S}}+\psi^*\nu^*D).\widehat{C}.
    \end{align*}
    Finally, note that $\psi^*\nu^*D$ is an effective $\bQ$--divisor with $\Ard{\widehat{C}}{\psi^*\nu^*D}=\Ard{C}{D}=1$, and thus we can write
    \begin{align*}
        \psi^*\nu^*D=\widehat{C}+D',
    \end{align*}
    where $D'$ is an effective $\bQ$--divisor that intersects properly with $\widehat{C}\not\subseteq \Supp D'$. Hence, we have
    \begin{align*}
        (K_S+D).C\geq (K_{\widehat{S}}+\psi^*\nu^*D).\widehat{C}=(K_{\widehat{S}}+\widehat{C}+D').\widehat{C}\geq(K_{\widehat{S}}+\widehat{C}).\widehat{C}\geq -2
    \end{align*}
    by the arithmetic adjunction formula on $\widehat{S}$.
\end{proof}

\begin{remark}\label{rem:mumford_proper_intersection}
    Note that in Mumford's intersection theory on a normal surface $S$, we still have $C.C'\geq 0$ for all curves $C\neq C'$. Indeed, if $\psi\colon\widehat{S}\to S$ denotes the minimal resolution with exceptional curves $E_i$, then the matrix $M=(E_i.E_j)_{ij}$ is negative definite by \cite[Theorem 4-6-1]{Matsuki_Introduction_to_the_Mori_program}. Let us define the vectors $\gamma$ and $\gamma'$ by
    \begin{align*}
        \gamma\coloneqq (\widehat{C}.E_i)_i,\quad \gamma'\coloneqq (\widehat{C'}.E_i)_i,
    \end{align*}
    then it is straightforward to compute that
    \begin{align*}
        C.C'=\widehat{C}.\widehat{C'}-\gamma^{\intercal}M^{-1}\gamma'.
    \end{align*}
    Now note that $-M$ is a Stieltjes matrix, so $-M^{-1}$ has non-negative entries, as do $\gamma$ and $\gamma'$. Hence $-\gamma^{\intercal}M^{-1}\gamma'\geq 0$, so that $C.C'\geq 0$.
\end{remark}

\subsection{Seshadri constants}\label{subsec:Seshadri_constants}
Let us start with the definition of the Seshadri constant, which we will be using in this article.

\begin{definition}\label{def:Seshadri-constants}
    Let $V$ be a variety, $x\in V$ a closed point and $L$ a $\bQ$--divisor on $V$ such that $L.C\geq 0$ for all curves $C\subset V$ through $x$. The \emph{intersectional Seshadri-constant of $L$ at $x$}, denoted $\epsint(L;x)$, is defined by the expression
    \begin{align*}
        \epsint(L;x)\coloneqq\inf_{x\in \underset{\text{curve}}{C}\subset V}\frac{L.C}{\mult_x C}.
    \end{align*}
    If there exists a curve $C\subset V$ such that
    \begin{align*}
        \epsint(L;x)=\frac{L.C}{\mult_x C},
    \end{align*}
    we will call $C$ a \emph{Seshadri curve}.
\end{definition}

\begin{remark}\label{rem:def_Seshadri_nef_case}
    Note that if $L$ is nef, the intersectional Seshadri constant agrees with the usual one: in this case, if $\pi\colon\wt{V}\to V$ denotes the blow-up of $V$ at $x$ with exceptional divisor $E$, we have
    \begin{align*}
        \epsint(L;x)=\sup\{t\in\bQ_{\geq 0}\mid \pi^*L-tE\text{ nef}\},
    \end{align*}
    see \cite[Proposition 5.1.5]{Lazarsfeld_Positivity_in_algebraic_geometry_I}.
\end{remark}

The reason for using \autoref{def:Seshadri-constants} in the slightly more general case where we only suppose that $L$ is non--negative along curves through $x$ is because it is well adapted to working with prime divisors $V$ in a smooth ambient variety $X$ in which $x$ is a regular closed point. On the one hand, due to \autoref{cor:K_X+2(dimX)A_nef}, the hypothesis will be naturally satisfied for certain adjoint divisors. On the other hand, \autoref{def:Seshadri-constants} is compatible with restrictions. We will make use of this in the proof of \autoref{thmB}: a key step in the proof is a lower bound for $\epsint(K_S+4A|_S;x)$ for prime divisors $S\subset X$ in which $x$ is a regular closed point. This is essentially the content of \autoref{propA}.

\section{Lower bounds for Seshadri constants of adjoint bundles on surfaces}\label{sec:surface_case}

In this chapter, we apply the strategy explained in \autoref{subsec:strategy_and_main_results} to the surface case.

\begin{proposition}\label{prop:lower_bound_surfaces}
    Let $S$ be an lci projective surface, let $A,B$ be ample divisors on $S$ and let $x\in S$ be a regular closed point. Then
    \begin{align*}
        \epsint(K_S+3A+B;x)\geq
        \begin{cases}
            \frac{3}{4} &\text{if }B^2=1,\\
            1 &\text{otherwise.}
        \end{cases}
    \end{align*}
    In particular, we always have $\epsint(K_S+4A;x)\geq3/4$.
\end{proposition}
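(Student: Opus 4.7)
The strategy combines two lower bounds on intersection numbers: the multiplicity bound from \autoref{lem:intersection_lb_multiplicities}, for curves transverse to an auxiliary divisor, and the adjunction bound $(K_S+C).C\geq\mu^2-\mu-2$ together with \autoref{lem:adjunction_inequality_surfaces}, for curves inside it. Concretely, I fix $\delta>0$ and apply \autoref{lem:div_high_mult_irred} with $V=S$ and divisor $B$ to produce $D\in|B|_{\bQ}$ with $r:=\Ard{x}{D}>\sqrt{B^2}-\delta$. I then analyse an arbitrary curve $C\ni x$ with $\mu:=\mult_xC$ case by case, letting $\delta\to 0$ at the end.

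The cases $\mu=1$ and ``$\mu\geq 2$ with $C\not\subseteq\Supp D$'' are essentially immediate. If $\mu=1$, then $B.C\geq 1=\mu$ because $B$ is an ample Cartier divisor and $C$ is integral, and $(K_S+3A).C\geq 0$ by \autoref{rem:K_X+2(dimX)A_nef_surface}, yielding ratio $\geq 1$. If $\mu\geq 2$ and $C\not\subseteq\Supp D$, then \autoref{lem:intersection_lb_multiplicities} gives $D.C\geq r\mu$, and the same nefness yields ratio $\geq r$. Both exceed the targets $3/4$ (when $B^2=1$) or $1$ (when $B^2\geq 2$) in the limit $\delta\to 0$.

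The main case is $\mu\geq 2$ with $C\subseteq\Supp D$. Set $a:=\Ard{C}{D}$ and write $D=aC+D'$ with $\Ard{C}{D'}=0$. Regularity of $\cO_{S,x}$ makes $\ord_x$ additive on products, which transfers to $\Ard{x}{\cdot}$ on sums of effective $\bQ$-divisors and gives $\Ard{x}{D'}=r-a\mu$, hence $a\leq r/\mu$. Assuming first that $a\leq 1$, I use the convex decomposition
\[
K_S+3A+D=a(K_S+C)+(1-a)(K_S+3A)+3aA+D',
\]
intersect with $C$, and bound each piece via adjunction $(K_S+C).C\geq\mu^2-\mu-2$, nefness of $K_S+3A$, $A.C\geq 1$ (integer ample), and \autoref{lem:intersection_lb_multiplicities} applied to $D'$ (whose support avoids $C$). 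The $a\mu^2$ terms cancel, giving $(K_S+3A+B).C\geq a(1-\mu)+r\mu$, which is decreasing in $a$ on $[0,\min(1,r/\mu)]$ and, evaluated at the worst value, yields a ratio $\geq r(\mu^2-\mu+1)/\mu^2\geq 3r/4$, the last inequality minimised at $\mu=2$. As $\delta\to 0$, this gives $3/4$ when $B^2=1$ and $\geq 3\sqrt{B^2}/4\geq 3\sqrt{2}/4>1$ when $B^2\geq 2$.

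The residual subcase $a>1$ can only occur when $r>\mu\geq 2$, i.e.\ when $B^2\geq 5$ and $\mu=2$ (or similar configurations). Here I would scale $\bar D:=D/a\in|B/a|_{\bQ}$ so that $\Ard{C}{\bar D}=1$, apply \autoref{lem:adjunction_inequality_surfaces} to obtain $(K_S+\bar D).C\geq(r/a-1)\mu-2$, and combine with $3A.C\geq 3$ and $(B-\bar D).C=((a-1)/a)B.C\geq(a-1)/a$ (using $B.C\geq 1$) to extract a ratio $\geq(\mu-1)+2/\mu-1/r>3/2$, well above the target. The main bookkeeping obstacle I anticipate is managing the two subcases $a\leq 1$ and $a>1$ uniformly and justifying the additivity of $\Ard{x}{\cdot}$ on effective $\bQ$-divisors at the regular point $x$; once those are in place, the intersection-theoretic arithmetic is a direct calculation.
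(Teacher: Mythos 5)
Your overall strategy is the paper's: trade $B$ for a $\bQ$-divisor $D\in|B|_{\bQ}$ of high multiplicity at $x$, use $(K_S+3A).C\geq 0$ for curves through the regular point $x$, handle curves transverse to $D$ by \autoref{lem:intersection_lb_multiplicities}, and handle the component containing $C$ by adjunction; your optimization $r(\mu^2-\mu+1)/\mu^2\geq 3r/4$ is exactly the computation in the paper, and the numerics in all your cases check out. However, there is one genuine gap, and it sits precisely where the stated generality matters. In your main case you write $D=aC+D'$ as $\bQ$-divisors on $S$ and then intersect $a(K_S+C)$ with $C$ using $(K_S+C).C\geq\mu^2-\mu-2$. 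On an lci projective surface the curve $C$ need not be $\bQ$-Cartier (divisors in this paper are Cartier by convention), so the decomposition $D=aC+D'$ is not available, and the arithmetic-genus adjunction inequality you quote is only justified on a smooth surface; on a surface that is singular along part of $C$ it requires the conductor/minimal-resolution argument of \autoref{lem:adjunction_inequality_surfaces_auxilliary}. This generality is not cosmetic: the proposition is applied in \autoref{thm:threefold_case} to prime divisors $S_i\subset X$ that are regular at $x$ but singular elsewhere, so the smooth-surface shortcut cannot be assumed.

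The fix is exactly the device you already deploy in your residual subcase $a>1$: do not isolate $C$ inside $D$ at all, but rescale to $\tfrac1a D$ so that $\Ard{C}{\tfrac1a D}=1$ and apply \autoref{lem:adjunction_inequality_surfaces}, which yields $(K_S+\tfrac1a D).C\geq(\tfrac ra-1)\mu-2$ and internally performs the decomposition $\psi^*\nu^*(\tfrac1aD)=\widehat{C}+D'$ on the minimal resolution of the normalization, where everything is Cartier. Running this uniformly for all $a>0$ (with the convex combination weighted by $\min\{1,a\}$, which is the paper's $b^{\leq 1}$) merges your two subcases and removes the dependence on $C$ being $\bQ$-Cartier; the resulting inequality $(K_S+3A+B).C\geq\min\{1,a\}\bigl(1+(\tfrac ra-1)\mu\bigr)$ then reduces to the same $\tfrac34\mu\min\{\mu,r\}$ bound you obtained. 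Two smaller points are fine as you have them: additivity of $\Ard{x}{\cdot}$ at the regular point $x$ does hold (the $\fm$-adic order on a regular local ring is a valuation, and superadditivity alone would give the wrong direction, so you do need this), and $\Ard{C}{\cdot}$ is a genuine valuation because $C\not\subseteq\Sing(S)$ forces the generic point of $C$ to be a regular point of $S$.
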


\begin{remark}\label{rem:comp_propA_murayama}
    The hypotheses of \autoref{prop:lower_bound_surfaces} are very similar to the ones of \cite[Theorem 3.1]{Murayama_Effective_Fujita-type_theorems_for_surfaces_in_arbitrary_characteristic}. The subtle difference is that we do not impose $K_S+3A+B$ to be nef. If there are curves in the singular locus of $S$, then \autoref{lem:bend-and-break} does not apply to them, and we couldn't find a way to effectively ensure non--negativity of adjoint divisors along these curves. However, it applies to curves through $x$, as they automatically intersect the regular locus. We will exploit this extra flexibility in the proof of \autoref{thmB}, when working with prime divisors $S$ in a smooth threefold $X$ that are regular at $x$.
\end{remark}

\begin{proof}
    Let $D\in|B|_{\bQ}$ be arbitrary, and let $C\subset S$ be a curve through $x$, with multiplicity $\mu\coloneqq\mult_xC$. We introduce the following notation:
    \begin{align*}
        r\coloneqq\Ard{x}{D},\quad b\coloneqq\Ard{C}{D},\quad b^{\leq 1}\coloneqq\min\{b,1\},\quad\lambda\coloneqq\frac{b^{\leq 1}}{b}=\min\left\{1,\frac{1}{b}\right\}. 
    \end{align*}
    We then have
    \begin{align*}
        K_S+3A+B&\sim_{\bQ} (1-b^{\leq 1})(K_S+3A)+b^{\leq1}(K_S+3A)+(1-\lambda)B+\lambda D\\
        &=(1-b^{\leq 1})(K_S+3A)+(1-\lambda)B+b^{\leq 1}\left(K_S+3A+\frac{1}{b}D\right).
    \end{align*}
    Now, since $x$ is in the regular locus, we have $(K_S+3A).C\geq 0$ by \autoref{rem:K_X+2(dimX)A_nef_surface}. As $B$ is ample and $a^{\leq 1},\lambda\leq 1$, we obtain
    \begin{align*}
        (K_S+3A+B).C&\geq b^{\leq 1}\left(K_S+3A+\frac{1}{b}D\right).C\geq b^{\leq 1}\left(3+\left(K_S+\frac{1}{b}D\right).C\right).
    \end{align*}
    Now notice that $D/b$ is an effective $\bQ$--divisor with $\Ard{C}{D/b}=1$, so by \autoref{lem:adjunction_inequality_surfaces} we have
    \begin{align*}
        (K_S+3A+B).C\geq b^{\leq 1}\left(1+\left(\frac{r}{b}-1\right)\mu\right).
    \end{align*}
    We now further investigate the left hand side: note that
    \begin{align*}
        1+\left(\frac{r}{b}-1\right)\mu&=\mu^2-\mu+1+\frac{1}{b}(r-b\mu)\mu\\
        &\geq \frac{3}{4}\mu^2+\frac{1}{b}(r-b\mu)\mu\\
        &\geq \frac{3}{4}\mu^2+\frac{3}{4}\cdot\frac{1}{b}(r-b\mu)\mu\\
        &=\frac{3r}{4b}\mu,
    \end{align*}
    where in the last inequality, we used that $r\geq b\mu$ by \autoref{lem:order_transitive_restriction}. Finally, multiplying by $b^{\leq 1}$, we obtain
    \begin{align*}
        (K_S+3A+B).C&\geq b^{\leq 1}\frac{3r}{4b}\mu\\
        &=\min\left\{\frac{3r}{4b}\mu,\frac{3r}{4}\mu\right\}\\
        &\geq\min\left\{\frac{3}{4}\mu^2,\frac{3r}{4}\mu\right\}\\
        &=\frac{3}{4}\mu\min\{\mu,r\}
    \end{align*}
    where for the last inequality, we again used $r\geq b\mu$ by \autoref{lem:order_transitive_restriction}. As we chose $D\in|A|_{\bQ}$ arbitrarily, we can take the supremum of $r=\Ard{x}{D}$ as $D$ ranges through the $\bQ$--linear system to obtain
    \begin{align*}
        (K_S+3A+B).C\geq \frac{3}{4}\mu\min\left\{\mu,\sqrt{B^2}\right\}\geq \frac{3}{4}\mu
    \end{align*}
    by \autoref{lem:div_high_mult_irred}. This shows that we always have $\epsint(K_S+3A+B;x)\geq 3/4$.
    
     To conclude, note that, as $(K_S+3A+B).C\geq 1$, we can actually slightly improve the inequality above to 
    \begin{align*}
        (K_S+3A+B).C\geq\max\left\{1,\frac{3}{4}\mu\min\left\{\mu,\sqrt{B^2}\right\}\right\}.
    \end{align*}
    Now note that if $B^2\geq 2$, then we always have
    \begin{align*}
        \max\left\{1,\frac{3}{4}\mu\min\left\{\mu,\sqrt{B^2}\right\}\right\}\geq \mu.
    \end{align*}
    Indeed, this is true for $\mu=1$, and if $\mu\geq 2$ then
    \begin{align*}
        \frac{3}{4}\mu\min\left\{\mu,\sqrt{B^2}\right\}\geq \frac{3}{4}\mu\sqrt{2}\geq\mu.
    \end{align*}
    This shows that $\epsint(K_X+3A+B;x)\geq 1$ if $B^2\geq 2$.
\end{proof}

\begin{remark}\label{rem:surface_case}
    Comparing \autoref{prop:lower_bound_surfaces} with \cite[Theorem 4.1]{Bauer_Szemberg_on_the_Seshadri_constants_of_adjoint_line_bundles} resp.\ \cite[Corollary 3.2]{Murayama_Effective_Fujita-type_theorems_for_surfaces_in_arbitrary_characteristic} might suggest that the bounds in \autoref{prop:lower_bound_surfaces} are an improvement of the constant $1/2$ when applied to adjoint divisors of the form $K_X+4A$. However, following their proof for the divisor $K_S+4A$ actually gives
    \begin{align*}
       \varepsilon(K_S+4A;x)\geq 1
    \end{align*}
    for every point $x$ on a smooth surface $S$. Therefore, if, e.g.,\ $A=B$, $A^2=1$ and $S$ is smooth, then \autoref{prop:lower_bound_surfaces} is not sharp.
\end{remark}

\section{Results on Seshadri constants of adjoint bundles on smooth threefolds}\label{sec:threefold_case}

We now apply the same method as in \autoref{sec:surface_case} to the case where $X$ is a smooth projective threefold. Unfortunately, we are only able to obtain a lower bound after excluding finitely many curves.

\begin{thm}\label{thm:threefold_case}
    Let $X$ be a smooth projective threefold, let $A,B,B'$ be ample divisors on $X$ and let $x\in X$ be a closed point. Then for every $\delta>0$, there exist at most finitely many integral curves $C$ through $x$ such that
    \begin{align*}
        \frac{(K_X+4A+B+B').C}{\mult_x C}\leq \frac{1}{2\sqrt{2}}-\delta.
    \end{align*}
    In particular, if $\varepsilon\coloneqq\varepsilon(K_X+4A+B+B';x)<1/2\sqrt{2}$, then $\varepsilon$ is rational and there exists a Seshadri curve.
\end{thm}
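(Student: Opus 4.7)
The plan is to imitate the proof of Proposition 3.1, but on the threefold $X$. By Lemma 2.7 applied to $B'$ with $V=X$, I would first produce $D\in|B'|_{\bQ}$ whose multiplicity satisfies $\Ard{x}{D}>\sqrt[3]{(B')^3}-\delta_1\geq 1-\delta_1$, using that $(B')^3\geq 1$ since $B'$ is an ample Cartier divisor. Writing $\mu=\mult_xC$, if $C\not\subseteq\Supp D$, then Lemma 2.6 gives $D.C\geq\Ard{x}{D}\mu\geq(1-\delta_1)\mu$, and combining with nefness of $K_X+4A$ (Corollary 2.10, since $n+1=4$) and ampleness of $B$, the ratio $(K_X+4A+B+B').C/\mu$ easily exceeds $1/(2\sqrt{2})$. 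So only curves $C$ contained in the finitely many prime components $S_1,\ldots,S_k$ of $D$ require work.

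For such a curve $C\subseteq S$ with $S$ a prime component, I set $b=\Ard{S}{D}$, $b^{\leq 1}=\min(b,1)$, $\lambda=\min(1,1/b)$, and write $D=bS+R$ with $R$ effective and $S\not\subseteq\Supp R$. In analogy with Proposition 3.1, I would use the decomposition
\begin{align*}
K_X+4A+B+B' \sim_{\bQ} b^{\leq 1}(K_X+S+3A+B)+(1-b^{\leq 1})(K_X+4A+B)+b^{\leq 1}A+\lambda R+(1-\lambda)B',
\end{align*}
where on the right-hand side all summands except possibly $\lambda R$ pair non-negatively with $C$ by nefness/ampleness, and $\lambda R.C\geq 0$ as long as $C\not\subseteq\Supp R$ by Lemma 2.6. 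The central term restricts via adjunction to $b^{\leq 1}(K_S+3A|_S+B|_S)$. When $x\in S_{\reg}$ (i.e.\ $\mult_xS=1$), $S$ is an lci surface and Proposition 3.1 applies directly, giving $(K_S+3A|_S+B|_S).C\geq(3/4)\mu$, hence the total bound $(3/4)b^{\leq 1}\mu$. This exceeds $(1/(2\sqrt{2})-\delta)\mu$ as soon as $b^{\leq 1}\geq\sqrt{2}/3-O(\delta)$.

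The main obstacles are (a) the regime where $b^{\leq 1}$ is small and $x\in S_{\reg}$, and (b) the singular regime $\mult_xS\geq 2$. For $\mult_xS\geq 3$, Lemma 2.8 directly yields $(K_X+S+4A).C\geq(\mult_xS-2)\mu\geq\mu$, which combined with the decomposition above (now with $b^{\leq 1}=1$ since $b\geq\mult_xS/\Ard{x}{D}$ after pushing the argument) gives the desired bound. The critical case is $\mult_xS=2$, where Lemma 2.8 only gives $\geq 0$; I expect to squeeze out the factor $1/(2\sqrt{2})=1/\sqrt{8}$ by running the \emph{interior} optimization from the proof of Proposition 3.1 (using Lemma 2.7 with $V=S$ to find a divisor in $|B|_{\bQ}$ with $\Ard{x}{\cdot|_S}$ close to $\sqrt{B^2.S/\mult_xS}$), and pairing this with the intersection--theoretic decomposition carefully so that $\mult_xS=2$ contributes the $1/\sqrt{2}$ factor. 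For the small $b^{\leq 1}$ case, the idea is that such curves are very constrained: they must lie in one of the finitely many components $S_i$ with small coefficient in $D$, and within any fixed $S_i$ the curves violating the ratio bound satisfy $(K_X+4A+B+B').C$ bounded relative to $\mu$, which via a Bauer--Szemberg--type degree argument (cf.\ \cite[Proposition 1.1]{Bauer_Szemberg_Seshadri_constants_on_surfaces_of_general_type}) restricts them to a bounded family, ultimately forcing finitely many exceptions. Rationality is then an immediate corollary: if $\varepsilon(K_X+4A+B+B';x)<1/(2\sqrt{2})$, taking $\delta\to 0$ exhibits the infimum as a minimum over the finite exceptional set, each member of which gives a rational Seshadri ratio.
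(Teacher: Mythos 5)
Your overall architecture matches the paper's: produce $D$ in a $\bQ$--linear system with $\Ard{x}{D}$ close to $1$, reduce to curves inside its components $S$, use adjunction plus \autoref{prop:lower_bound_surfaces} when $\mult_xS=1$, \autoref{lem:Seshadri_hypersurface_high_mult} when $\mult_xS\geq 3$, and an auxiliary divisor from \autoref{lem:div_high_mult_irred} restricted to $S$ (whose support curves go into the finite exceptional set) when $\mult_xS=2$. But there are genuine gaps. The central one is the regime where $b^{\leq 1}$ is small, which you acknowledge but do not resolve: a ``Bauer--Szemberg--type degree argument'' has no analogue here (on a threefold there is no self-intersection of curves to exploit, and a bounded family is not finitely many curves), so as written the exceptional set is not finite. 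Worse, this regime is not exotic: your parenthetical claim that $b\geq \mult_xS/\Ard{x}{D}$, hence $b^{\leq 1}=1$, when $\mult_xS\geq 3$ is backwards --- \autoref{lem:order_transitive_restriction} gives $\Ard{x}{D}\geq \mult_xS\cdot\Ard{S}{D}$, i.e.\ $b\leq \Ard{x}{D}/\mult_xS\lesssim 1/3$, so precisely the high-multiplicity components force $b^{\leq 1}$ to be small and your good bound gets multiplied by a small constant. The paper's fix is not to discard the residual part: writing $D=D_I+D_J$ with $D_I$ the components containing $C$, one keeps $\lambda D_J.C\geq\lambda\Ard{x}{D_J}\mu$ from \autoref{lem:intersection_lb_multiplicities} and uses additivity of $\ord_x$ at the smooth point $x$ to get $\rho\,\Ard{x}{D_I}+\Ard{x}{D_J}\geq\rho\,\Ard{x}{D}\geq\rho(1-\delta)$; this compensation is exactly what rescues the small-$b_I$ case and is the missing idea in your proposal.

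Two further points. First, your step ``$\lambda R.C\geq 0$ as long as $C\not\subseteq\Supp R$'' silently assumes $C$ lies in only one component of $D$; if $C\subseteq S_1\cap S_2$ and you single out $S=S_1$, then $C\subseteq\Supp R$ and $R.C$ can be negative (the coefficient of $S_2$ contributes a $\deg\cO_X(S_2)|_C$ which need not be non-negative). You must treat all components containing $C$ simultaneously by adjunction, as the paper does with the index set $I$. Second, the $\mult_xS=2$ case is only a statement of intent (``I expect to squeeze out the factor $1/(2\sqrt{2})$''); the actual mechanism is that \autoref{lem:Seshadri_hypersurface_high_mult} gives $(K_X+S+4A).C\geq(\mult_xS-2)\mu$ and the auxiliary divisor $D'|_S$ supplies the extra $(1/\sqrt{\mult_xS}-\delta)\mu$ via \autoref{lem:intersection_lb_multiplicities}, which is where the constant $1/(2\sqrt{2})$ comes from after normalizing by $\mult_xS$. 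These need to be written out before the argument closes.
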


\begin{proof}
    Fix $\delta>0$. Using \autoref{lem:div_high_mult_irred}, choose $D\in|B|_{\bQ}$ such that
    \begin{align*}
        \Ard{x}{D}>\sqrt[3]{B^3}-\delta\geq1-\delta.
    \end{align*}
    Let us write $D=\sum_i b_i S_i$ for prime divisors $S_i\subset X$. For every $i$ such that $\mult_x S_i\in\{2,3\}$, choose $D'_i\in|B'|_{\bQ}$ be such that
    \begin{align*}
        \Ard{x}{D_i'|_{S_i}}> \sqrt{\frac{B'^2.S_i}{\mult_x S_i}}-\delta\geq \frac{1}{\sqrt{\mult_x S_i}}-\delta,
    \end{align*}
    again using \autoref{lem:div_high_mult_irred}. Finally, let $\cC$ be the finite collection of integral curves defined by
    \begin{align*}
        \cC\coloneqq\left\{C\mid \exists i: \left(C\subseteq\Sing(S_i)\right)\lor\left(\mult_xS_i\in\{2,3\}\text{ and }C\subseteq\Supp(D'_i|_{S_i})\right)\right\}.
    \end{align*}
    Let now $C$ be an integral curve through $x$ which is not an element of $\cC$. We are then going to show that
    \begin{align*}
        \frac{(K_X+4A+B+B').C}{\mult_x C}> \frac{1}{2\sqrt{2}}-\delta.
    \end{align*}
    To this end, we introduce the following notation:
    \begin{align*}
        \mu\coloneqq\mult_x C&,\quad \mu_i\coloneqq\mult_x S_i\\
        I\coloneqq\{i\mid C\subset S_i\},&\quad J\coloneqq\{i\mid C\not\subset S_i\}\\
        D_I\coloneqq \sum_{i\in I}b_iS_i,&\quad D_J\coloneqq\sum_{i\in J}b_iS_i\\
        b_I\coloneqq\sum_{i\in I}b_i,\quad b_I^{\leq 1}\coloneqq&\min\{1,b_I\},\quad \lambda\coloneqq \frac{b_I^{\leq 1}}{b_I}.
    \end{align*}
    Note that
    \begin{align*}
        B&\sim_{\bQ} (1-\lambda)B+\lambda D_I+\lambda D_J\\
        &=(1-\lambda)B+b_I^{\leq 1}\sum_{i\in I}\frac{b_i}{b_I}S_i+\lambda D_J
    \end{align*}
    and
    \begin{align*}
        (K_X+4A+B')=(1-b_I^{\leq 1})(K_X+4A+B')+b_I^{\leq 1}(K_X+4A+B').
    \end{align*}
    As $B$ and $K_X+4A+B'$ are ample by \autoref{cor:K_X+2(dimX)A_nef}, we thus obtain
    \begin{align*}
        (K_X+4A+B+B').C&\geq b_I^{\leq 1}(K_X+4A+B').C+b_I^{\leq 1}\sum_{i\in I}\frac{b_i}{b_I}S_i.C+\lambda D_J.C\\
        &=b_{I}^{\leq 1}\sum_{i\in I}\frac{b_i}{b_I}(K_X+S_i+4A+B').C+\lambda D_J.C.
    \end{align*}
    Now, on the one hand, we have by construction $C\not\subset\Supp(D_J)$, and thus
    \begin{align*}
        D_J.C\geq \Ard{x}{D_J}\mu
    \end{align*}
    by \autoref{lem:intersection_lb_multiplicities}. On the other hand, let us investigate the terms $(K_X+S_i+4A+B').C$ by distinguishing different values of $\mu_i$. If $\mu_i\geq 3$, then the bound of \autoref{lem:bend-and-break} will turn out to be good enough. But the components $S_i$ with $\mu_i\leq2$ need special treatment, because \autoref{lem:bend-and-break} gives a non-positive lower bound for the relevant intersection numbers on them, which is not helpful. For components where $\mu_i=1$, we can get by because then $x$ is a regular point on $S_i$ and we can apply \autoref{prop:lower_bound_surfaces}. If $\mu_i=2$, we are a priori in a difficult position, because we can neither apply \autoref{prop:lower_bound_surfaces}, nor \autoref{lem:bend-and-break}. We get around this issue by using that $C\not\subset\Supp(D_i'|_{S_i})$, as $C\notin\cC$. We then get a lower bound simply by using \autoref{lem:intersection_lb_multiplicities}. The reason why we treat components with $\mu_i=3$ similar to those with $\mu_i=2$ is to optimize the numerics, see \autoref{rem:treatment_different_mults}.\medskip
    \begin{itemize}
        \item[$\underline{\mu_i=1}:$] In this case, we have
        \begin{align*}
            (K_X+S_i+4A+B').C=(K_{S_i}+4A|_{S_i}+B'|_{S_i}).C.
        \end{align*}
        By \autoref{prop:lower_bound_surfaces}, we have
        \begin{align*}
            \varepsilon(K_{S_i}+4A|_{S_i}+B'|_{S_i};x)=\varepsilon(K_{S_i}+3A|_{S_i}+(A|_{S_i}+B'|_{S_i});x)\geq 1,
        \end{align*}
        and thus
        \begin{align*}
            (K_X+S_i+4A+B').C\geq \mu=\mu_i\mu.
        \end{align*}\medskip
        \item[$\underline{\mu_i=2,3}:$] In this case, since $C\not\subset\Sing(S_i)$, we have
        \begin{align*}
            (K_X+S_i+4A+B').C\geq (\mu_i-2)\mu+B'.C.
        \end{align*}
        by \autoref{lem:Seshadri_hypersurface_high_mult}. Furthermore, as $C\not\subset\Supp(D_i'|_{S_i})$, we have
        \begin{align*}
            B'.C=D_i'|_{S_i}.C\geq\Ard{x}{D_i'|_{S_i}}\mu\geq \left(\frac{1}{\sqrt{\mu_i}}-\delta\right)\mu
        \end{align*}
        by \autoref{lem:intersection_lb_multiplicities}. Hence,
        \begin{align*}
            (K_X+S_i+4A+B').C&\geq\left(\mu_i-2+\frac{1}{\sqrt{\mu_i}}-\delta\right)\mu\\
            &\geq \left(\frac{1}{2\sqrt{2}}-\frac{\delta}{2}\right)\mu_i\mu,
        \end{align*}
        where the last inequality follows from a simple computation.\medskip
        \item[$\underline{\mu_i\geq 4}:$] In this case, as $C\not\subset\Sing(S_i)$, we have
        \begin{align*}
            (K_X+S_i+4A+B').C&\geq (\mu_i-2)\mu\\
            &\geq\frac{1}{2}\mu_i\mu.
        \end{align*}
    \end{itemize}
    In conclusion, in all cases we have
    \begin{align*}
        (K_X+S_i+4A+B').C\geq\rho\mu_i\mu
    \end{align*}
    where
    \begin{align*}
        \rho\coloneqq \frac{1}{2\sqrt{2}}-\frac{\delta}{2}.
    \end{align*}
    Combining all of the above, we obtain that
    \begin{align*}
        (K_X+4A+B+B').C&\geq b_{I}^{\leq 1}\sum_{i\in I}\frac{b_i}{b_I}\rho\mu_i\mu+\lambda \Ard{x}{D_J}\mu\\
        &=\lambda\rho\mu\Ard{x}{D_I}+\lambda\mu\Ard{x}{D_J}.
    \end{align*}
    Now, if $b_I\geq 1$, then $\lambda=1/b_I$, and thus
    \begin{align*}
        \lambda\Ard{x}{D_I}=\frac{1}{b_I}\sum_{i\in I}b_i\mu_i\geq\min_{i\in I}\mu_i\geq 1.
    \end{align*}
    So in this case, we have
    \begin{align*}
        (K_X+4A+B+B').C\geq\rho\mu.
    \end{align*}
    On the other hand, if $m_I\leq 1$, then $\lambda =1$, and as $\rho\leq 1$, we have
    \begin{align*}
        \rho\Ard{x}{D_I}+\Ard{x}{D_J}&\geq\rho(\Ard{x}{D_I}+\Ard{x}{D_J})\\
        &=\rho \Ard{x}{D}\\
        &\geq \rho(1-\delta).
    \end{align*}
    So in this case we have
    \begin{align*}
        (K_X+4A+B+B').C\geq\rho(1-\delta)\mu.
    \end{align*}
    To summarize, we have shown that
    \begin{align*}
        \frac{(K_X+4A+B+B').C}{\mult_x C}\geq\left(\frac{1}{2\sqrt{2}}-\frac{\delta}{2}\right)(1-\delta)>\frac{1}{2\sqrt{2}}-\delta
    \end{align*}
    whenever $C\notin\cC$. Therefore, if $C$ is an integral curve through $x$ with
    \begin{align*}
        \frac{(K_X+4A+B+B').C}{\mult_x C}\leq \frac{1}{2\sqrt{2}}-\delta,
    \end{align*}
    then we must have $C\in \cC$, and as $\cC$ is finite, we are done.
\end{proof}

\begin{remark}\label{rem:treatment_different_mults}
    For components with $\mult_x S_i=3$, the argument would have gone through as for those with higher multiplicity. However, this would have resulted in the constant $1/3=0.33...$ instead of $1/2\sqrt{2}=0.35...$, so we decided to go for this slight improvement, by treating them similarly to the components with $\mult_x S_i=2$.
\end{remark}

\begin{remark}\label{rem:mult_2_case}
    Note that combining \autoref{lem:bend-and-break} and \autoref{prop:lower_bound_surfaces} yields universal lower bounds for $\epsint(K_S+4A|_S;x)$ whenever $S\subset X$ has an isolated singularity with multiplicity $\mult_xS\neq2$ at $x$. It is natural to ask if there is a similar lower bound for isolated singularities with multiplicity $2$. The key technical difficulty here is the phenomenon in \autoref{rem:counterexample_order_semicont}: when running the proof of \autoref{prop:lower_bound_surfaces}, we have no universal lower bound for $\Ard{x}{D}/\Ard{C}{D}$, and without this the proof breaks down.

    However, we have some partial results in this direction. If we further suppose that
    \begin{align*}
        \wht{\cO}_{S,x}\cong \factor{k[[x,y,z]]}{(z^2-a(x,y))}
    \end{align*}
    and that the initial part of $a(x,y)\in k[[x,y]]$ is not a square, we can exclude the phenomenon in \autoref{rem:counterexample_order_semicont} by proving that 
    \begin{align*}
        \frac{\Ard{x}{D|_S}}{\Ard{C}{D|_S}}\geq\frac{1}{4}
    \end{align*}
    for every $D\in |A|_S|_{\bQ}$, using the techniques in \cite{Bravo_Encinas_Guillan-Rial_On_some_properties_of_the_asymptotic_Samuel_function}. This will be explored in future work.
\end{remark}

\bibliographystyle{amsalpha} 
\bibliography{includeNice}
\end{document}